\title{A note on fast times of Brownian motion with variable drift}
\author{}
\theoremstyle{plain}
\newtheorem{theorem}{Theorem}[section]
\newtheorem{lemma}[theorem]{Lemma}
\newtheorem{proposition}[theorem]{Proposition}
\newtheorem{corollary}[theorem]{Corollary}
\theoremstyle{definition}
\theoremstyle{remark}
\DeclareMathOperator{\zero}{\mathcal{Z}}
\newcommand{\holder}{H\"{o}lder }
\newcommand{\RR}{\mathbb{R}} 
\newcommand{\FF}{\mathfrak{F}} 
\newcommand{\GG}{\mathfrak{G}} 
\newcommand{\one}{\mathbf{1}}
\newcommand{\PP}{\mathbb{P}} 
\newcommand{\Ew}{\mathbb{E}} 
\newcommand{\JJ}{\mathcal{J}} 
\newcommand{\SSS}{\mathcal{S}} 
\newcommand{\KK}{\mathcal{K}} 
\DeclareMathOperator{\Var}{\mathbb{V}}
\begin{document}
\author[J. Ruscher]{Julia Ruscher}

\address{
Fachbereich Mathematik\\
Technische Universit\"{a}t Berlin\\
Strasse des 17. Juni 136\\
D-10623 Berlin\\
and\newline
Institute for Advanced Study\\
Einstein Drive\\
Princeton, NJ 08540
}

\email{ruscher@math.tu-berlin.de}


\keywords{Brownian motion, fast times, Hausdorff dimension}
\begin{abstract}
A famous result of Orey and Taylor gives the Hausdorff dimension of the set of fast times, that is the set of points where linear Brownian
motion moves faster than according to the law of iterated logarithm.
In this paper we examine what happens to the set of fast times if a variable drift is added to linear Brownian motion.
In particular, we will show that the Hausdorff dimension of the set of fast times cannot be decreased by adding a function to Brownian motion.
\end{abstract}

\maketitle
\section{Introduction}

Let $B(t)$ be standard one-dimensional Brownian motion with $B(0)=0$.
In 1974 Orey and Taylor \cite{OT74} studied the so-called fast points of Brownian motion. That is, for a given $a>0$ a time $t\in[0,1]$ with \begin{align*}
\limsup_{h \downarrow 0} \frac{|B(t+h)-B(t)|}{\sqrt{2h\log{(1/h)}}} \geq a
\end{align*}
is called an \emph{$a$-fast time} of linear Brownian motion. By the law of iterated logarithm follows that the set of $a$-fast times has Lebesgue measure zero. Therefore, to quantify how often these $a$-fast times occur we use Hausdorff dimension. Orey and Taylor \cite{OT74} showed 
for every $a\in[0,1]$,
\begin{align} \label{thm: OT74}
\dim \Big\{ t\in[0,1] \Big| \limsup_{h \downarrow 0} \frac{|B(t+h)-B(t)|}{\sqrt{2h\log{(1/h)}}} \geq a  \Big\} = 1-a^2,
\end{align}
almost surely.

Khoshnevisan and Shi (\cite{KS}) extended Orey's and Taylor's results \cite{OT74} in several different ways. One of which is the intersection of the set of fast points with the zero set of Brownian motion.

\begin{theorem}[\cite{KS}] \label{thm: KS2}
Let $\zero(B):=\{ t\in(0,1] | B(t)= 0 \}$. For every $a\in(0,1]$
\[
\dim \Big\{ t\in \zero(B) \Big| \limsup_{h \downarrow 0} \frac{|B(t+h)-B(t)|}{\sqrt{2h\log{(1/h)}}} \geq a  \Big\} = \max \Big\{\frac{1}{2}-a^2,0\Big\}
\]
almost surely.
\end{theorem}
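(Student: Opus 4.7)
Write $F_a$ for the $a$-fast time set; the claim is $\dim(F_a\cap\zero(B))=\max\{\tfrac12-a^2,0\}$ a.s.

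\emph{Upper bound (first moment).} I would dyadically partition $[0,1]$ into $I_{n,k}=[k2^{-n},(k+1)2^{-n}]$ and call $I_{n,k}$ \emph{good} if it contains a zero of $B$ and carries an oscillation of size at least $a\sqrt{2\cdot 2^{-n}\log 2^n}$ on some sub-interval. Any $t\in F_a\cap\zero(B)$ lies in a good interval at infinitely many scales $n$. By the Gaussian tail, the oscillation event has probability $\lesssim n^{-1/2}2^{-na^2}$; by the reflection principle and averaging against the marginal of $B(k2^{-n})$, the zero event has probability $\asymp(k\vee 1)^{-1/2}$. Once one conditions on $B(k2^{-n})$ the two events are nearly independent (the oscillation event is a tail estimate while the zero event is an order-one conditional event in the same increments), so the joint probability factorises up to constants and
$$\mathbb{E}\bigl[\#\{\text{good }I_{n,k}\}\bigr]\lesssim n^{-1/2}\,2^{n(1/2-a^2)}.$$
Markov plus Borel--Cantelli then yield $\mathcal{H}^s(F_a\cap\zero(B))=0$ a.s.\ for every $s>\tfrac12-a^2$, giving the upper bound; the same estimate shows $F_a\cap\zero(B)=\emptyset$ a.s.\ when $a>1/\sqrt{2}$.

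\emph{Lower bound (second moment).} Assume $a<1/\sqrt{2}$ and fix $\gamma<\tfrac12-a^2$. The plan is to build a random measure $\mu$ carried by $F_a\cap\zero(B)$ with $\mathbb{E}\mu([0,1])>0$ and finite expected $\gamma$-energy; Frostman's lemma then gives $\dim(F_a\cap\zero(B))\ge\gamma$ on an event of positive probability, upgraded to almost surely by a standard zero--one argument. Setting
$$\mu_n(dt)=n^{1/2}\,2^{n(1/2+a^2)}\sum_k\mathbf{1}\{I_{n,k}\text{ good}\}\,\mathbf{1}_{I_{n,k}}(t)\,dt,$$
the upper-bound computation already controls $\mathbb{E}\mu_n([0,1])$ from above and (by matching lower bounds on the Gaussian tail and on the zero probability) from below. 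For the second moment I would condition on $B$ at the endpoints of two disjoint intervals $I_{n,k},I_{n,\ell}$: the oscillation events decouple exactly by the Markov property, and the zero events decouple up to the joint Gaussian density of the endpoints together with a Brownian-bridge hitting calculation, producing
$$\mathbb{E}\bigl[\mathbf{1}\{I_{n,k}\text{ good}\}\mathbf{1}\{I_{n,\ell}\text{ good}\}\bigr]\lesssim \mathbb{P}(I_{n,k}\text{ good})\,\mathbb{P}(I_{n,\ell}\text{ good})\bigl(1\wedge(|k-\ell|2^{-n})^{-1/2}\bigr).$$
This suffices to bound $\mathbb{E}\iint|s-t|^{-\gamma}\mu_n(ds)\mu_n(dt)$ uniformly in $n$, so a weak-$\ast$ subsequential limit $\mu$ has the required properties. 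Supports are correct because any $t$ in $\operatorname{supp}\mu$ lies in infinitely many good intervals and therefore in $F_a\cap\zero(B)$.

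\emph{Main obstacle.} The heart of the argument is the correlation bound for the zero events, which is where the exponent $\tfrac12$ in $\tfrac12-a^2$ is genuinely produced. Fast-time events on disjoint dyadic intervals are essentially independent by the Markov property, so they add nothing beyond the classical Orey--Taylor second-moment calculation; but the zero events are linked through the joint law of $B$ at the interval endpoints, and the extra $(|k-\ell|2^{-n})^{-1/2}$ factor---reflecting the half-dimensional local-time structure of $\zero(B)$---must be extracted quantitatively and uniformly in $n$. Making this decoupling precise, and checking that conditioning on a zero in $I_{n,k}$ biases the law of $B$ nearby only by a bounded factor, is the only non-routine step; the rest is bookkeeping with Frostman energy, Borel--Cantelli, and weak-$\ast$ compactness.
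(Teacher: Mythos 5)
First, note that the paper does not prove this statement itself --- it is quoted from \cite{KS} --- but Sections 3 and 4 reprove both halves (adapted to a drift) by exactly the first-moment/second-moment split you propose, so the comparison is meaningful. Your overall architecture is the right one, but each half has a genuine gap.

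In the upper bound, your factorisation of the probability that $I_{n,k}$ is ``good'' does not hold as stated: both the oscillation event and the event that $I_{n,k}$ contains a zero depend on the increments of $B$ \emph{inside} $I_{n,k}$, so conditioning on $B(k2^{-n})$ does not decouple them, and ``nearly independent up to constants'' is precisely the claim that needs proof. The fix used in the paper (Section 3, following \cite{KS}) is to enlarge the zero event: by L\'evy's modulus of continuity, if $I_{n,k}$ meets $\zero(B)$ then $|B(k2^{-n})|\leq c\sqrt{n2^{-n}}$ for all large $n$ a.s., and \emph{this} event is measurable with respect to the path up to time $k2^{-n}$, hence genuinely independent of the oscillation event on $[k2^{-n},(k+1)2^{-n}]$. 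Its probability is $\lesssim \sqrt{n}\,k^{-1/2}$, which after summation still yields the exponent $1/2-a^2$ (the extra $\sqrt{n}$ is harmless). Without this replacement your first-moment bound is unjustified.

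In the lower bound, the weak-$\ast$ limit construction has the classical limsup-fractal pitfall: a subsequential weak-$\ast$ limit of the measures $\mu_n$ is supported in $\bigcap_N\overline{\bigcup_{j\geq N}\operatorname{supp}\mu_{n_j}}$, and since each $\bigcup_{j\geq N}\operatorname{supp}\mu_{n_j}$ is a non-closed countable union of intervals, points of $\operatorname{supp}\mu$ need not lie in infinitely many good intervals; your final sentence (``supports are correct because\dots'') is exactly the step that fails. One also still needs a triangle-inequality argument to convert an oscillation anchored at the dyadic endpoint into a fast time at $t$ itself (this is done in Section 5 of the paper). The route taken in \cite{KS} and in Section 4 here avoids the measure construction entirely: one shows by a second-moment (Paley--Zygmund) bound that for every compact $E$ with $\dim(E)>a^2+\tfrac12$ the set $F_a\cap\zero(B)\cap E$ is nonempty with positive probability, and then invokes the stochastic codimension inequality $\overline{\mathrm{codim}}(M)+\dim(M)\geq 1$ together with Lemma \ref{lm: KS codimensionlemma} to intersect over scales $h\downarrow 0$. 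Either you adopt that codimension argument, or you must replace the weak-$\ast$ limit by a genuinely nested (Cantor-type) construction across scales; as written the lower bound does not go through. Finally, the upgrade from positive probability to almost surely is not a generic ``standard zero--one argument'' for the intersected set; it requires the Blumenthal germ argument at $0$ (or scaling invariance of the pair $(F_a,\zero(B))$), and is worth a sentence of its own.
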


In the present note we will first give some general remarks on fast times of Brownian motion with variable drift, see section \ref{firstremarkssection}.
We can extend the result of Orey and Taylor by adding a continuous function $f$ to Brownian motion and giving a general lower bound on the Hausdorff dimension of the set of $a$-fast times. 
In particular, this theorem implies that by adding a function to Brownian motion the Hausdorff dimension of the set of $a$-fast times cannot be decreased.

\begin{theorem} \label{thm: fast times lower bound general drift}
Suppose $f \colon \mathbb{R}^+ \to \mathbb{R}$ is an arbitrary function and $X(t):= B(t)-f(t)$. For every $a\in(0,1]$
\[
\dim \Big\{ t\in [0,1] \Big| \limsup_{h \downarrow 0} \frac{|X(t+h)-X(t)|}{\sqrt{2h\log{(1/h)}}} \geq a  \Big\} \geq 1-a^2,
\]
almost surely.
\end{theorem}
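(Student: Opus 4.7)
The plan is to adapt the dyadic construction of Orey and Taylor so that, at each scale, the fast $B$-increment is forced to oppose the (deterministic) $f$-increment in sign; then by the triangle inequality the $X$-increment is automatically at least as large as the $B$-increment, and adding $f$ cannot destroy the fast $B$-increment provided the correct direction was chosen. Because $f$ is non-random, this sign-matching constraint leaves the relevant event for $B$ as a one-sided Gaussian tail event whose probability matches, up to constants, the two-sided event used in the drift-free case.

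Concretely, fix $\varepsilon>0$ and $h_n=2^{-n}$, and for each $n$ let $\Delta B_n(t) := B(t+h_n)-B(t)$, $\Delta f_n(t) := f(t+h_n)-f(t)$, and
\[
A_n := \bigl\{ t\in[0,1-h_n] : \Delta B_n(t)\cdot\Delta f_n(t)\le 0 \text{ and } |\Delta B_n(t)|\ge a\sqrt{2h_n\log(1/h_n)}\bigr\}.
\]
For any $t\in A_n$ the increments $\Delta B_n(t)$ and $\Delta f_n(t)$ have opposite signs, so
\[
|X(t+h_n)-X(t)|=|\Delta B_n(t)-\Delta f_n(t)|\ge |\Delta B_n(t)|\ge a\sqrt{2h_n\log(1/h_n)},
\]
whence every $t\in\limsup_n A_n$ is an $a$-fast time of $X$. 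It thus suffices to show $\dim(\limsup_n A_n)\ge 1-a^2$ almost surely. Since $\sign(\Delta f_n(t))$ is non-random, the event $\{t\in A_n\}$ reduces to a one-sided Gaussian tail event for $\Delta B_n(t)\sim N(0,h_n)$ with probability at least $c\,n^{-1/2}\,2^{-a^2 n}$ regardless of $f$, matching the rate that enters the drift-free Orey-Taylor lower bound.

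The final step is to convert this pointwise estimate into a Hausdorff dimension lower bound for $\limsup_n A_n$. I would invoke the Orey-Taylor machinery applied to the sets $A_n$: extract a sufficiently sparse subsequence $n_k$ so that Brownian increments used at different scales are approximately independent, construct a random mass distribution $\mu$ supported on $\limsup_k A_{n_k}$ by a nested selection of ``good'' dyadic sub-intervals, and verify through a second-moment/correlation computation that $\mu$ has finite $s$-energy for every $s<1-a^2-\varepsilon$. Letting $\varepsilon\downarrow 0$ then yields the claimed bound. The main obstacle is this Frostman-type bookkeeping step, but because all dependence on $f$ has been absorbed into the pointwise definition of $A_n$ and the resulting Gaussian rate matches the drift-free one exactly, the argument proceeds identically to that of \cite{OT74}.
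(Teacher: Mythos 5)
Your reduction is fine as far as it goes: if $t\in A_n$ then indeed $|X(t+h_n)-X(t)|=|\Delta B_n(t)|+|\Delta f_n(t)|\ge|\Delta B_n(t)|$, so $\limsup_n A_n$ consists of $a$-fast times of $X$, and for each \emph{fixed} $t$ the one-sided tail bound $\PP(t\in A_n)\ge c\,n^{-1/2}2^{-a^2n}$ is correct. The gap is in the last step, where you assert that the Orey--Taylor second-moment/Frostman machinery "proceeds identically" on the sets $A_n$. That machinery is a dyadic-interval argument: one declares an interval $I\in F_m$ good based on a single tested increment anchored at its left endpoint $t_I$, builds a measure on nested good intervals, and then transfers the fast-increment property from $t_I$ to every $t\in I$ via L\'evy's modulus of continuity. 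For your $A_n$ this transfer fails: the modulus of continuity does propagate the \emph{magnitude} condition $|\Delta B_n(t)|\ge a\sqrt{2h_n\log(1/h_n)}$ from $t_I$ to nearby $t$, and even the sign of $\Delta B_n(t)$, but it gives no control whatsoever on $\sign(f(t+h_n)-f(t))$ as $t$ ranges over $I$, since $f$ is an arbitrary function (not continuous, not even measurable). So a point lying in infinitely many good intervals need not lie in $\limsup_n A_n$; worse, for suitable $f$ (e.g.\ with $\Delta f_n$ positive on the rationals and negative on the irrationals) no interval is ever contained in $A_n$, and $A_n$ itself need not be a measurable set, so "a mass distribution supported on $\limsup_k A_{n_k}$" is not something the standard construction can deliver. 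The matching Gaussian rate is not the issue; the coherence of the event $\{t\in A_n\}$ across an interval is.

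The paper resolves exactly this difficulty by decoupling the two conditions. It builds the limsup fractal $\JJ$ using only the magnitude condition on $B$ (which does transfer across dyadic intervals), obtains a Frostman measure $\mu$ on $\JJ$ with finite $(\alpha-\epsilon)$-energy, and only then handles the sign: replacing $B$ by $\hat B=\pm B$ with a single independent fair coin, a pigeonhole over the infinitely many large-increment scales shows that, conditionally on $B$, each $t\in\JJ$ is an $a$-fast time of $\hat X=\hat B-f$ with probability at least $1/2$. Fubini gives $\mu(\hat\JJ)\ge\tfrac12$, the renormalized restriction of $\mu$ to $\hat\JJ$ still has finite energy, and $\dim\hat\JJ\ge\dim\JJ$ follows. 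If you want to keep your sign-matched sets $A_n$, you would need to supply a replacement for the interval-transfer step — which is the entire difficulty for arbitrary $f$ — so as written the proposal does not constitute a proof.
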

An example of a function $f$ where the dimension of $a$-fast times is strictly greater than $1-a^2$ is given in the next section (see Proposition \ref{prop: fasttimeswithCantorfunction} and the subsequent example).
The following result is an upper bound analogue of Theorem \ref{thm: KS2} for $1/2$-\holder continuous functions added to one-dimensional Brownian motion. Note that the Hausdorff dimension of the zero set of the latter is $1/2$, see Corollary 1.7 of \cite{ABPR}.

\begin{theorem}\label{thm: intersection fast times and zeroset}
Suppose $f \colon \mathbb{R}^+ \to \mathbb{R}$ is a $1/2$-\holder continuous function, $X(t):= B(t)-f(t)$ and let $\zero(X):=\{ t\in(0,1] | X(t)= 0 \}$. For every $a\in(0,1]$
\[
\dim \Big\{ t\in\zero(X) \Big| \limsup_{h \downarrow 0} \frac{|X(t+h)-X(t)|}{\sqrt{2h\log{(1/h)}}} \geq a  \Big\} \leq \max \Big\{\frac{1}{2}-a^2,0\Big\}
\]
almost surely.
\end{theorem}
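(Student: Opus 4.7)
My plan is first to reduce to driftless Brownian motion via the H\"older regularity of $f$. Since $|f(t+h)-f(t)|\leq C\sqrt{h}$ for some constant $C$,
\[
\frac{|X(t+h)-X(t)|}{\sqrt{2h\log(1/h)}}\leq \frac{|B(t+h)-B(t)|}{\sqrt{2h\log(1/h)}}+\frac{C}{\sqrt{2\log(1/h)}},
\]
and the last term tends to $0$ as $h\downarrow 0$. Consequently every $a$-fast time of $X$ is also an $a$-fast time of $B$, so it suffices to show $\dim(\zero(X)\cap E_a(B))\leq\max\{1/2-a^2,0\}$ almost surely, where $E_a(B)$ denotes the set of $a$-fast times of $B$.

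Next I would run a dyadic covering argument. Fix $\epsilon>0$. Following the Orey--Taylor strategy (as carried out in \cite{KS} for the intersection with $\zero(B)$), for each scale $n$ one produces a family $\mathcal{F}_n\subseteq\{0,\dots,2^n-1\}$ of indices with $E_{a-\epsilon}(B)\subseteq\limsup_n\bigcup_{k\in\mathcal{F}_n}I_{n,k}^{*}$, where $I_{n,k}=[k2^{-n},(k+1)2^{-n}]$ and $I_{n,k}^{*}$ is a bounded enlargement, such that $\{k\in\mathcal{F}_n\}$ depends only on increments of $B$ after the anchor time $t_{n,k}:=k2^{-n}$, and $\PP(k\in\mathcal{F}_n)\leq 2^{-n(a-\epsilon)^2+o(n)}$. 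The new ingredient compared to \cite{KS} is that the zero set is now $\zero(X)$ rather than $\zero(B)$. Set $\mathcal{Z}_n:=\{k: I_{n,k}\cap\zero(X)\neq\emptyset\}$ and apply the Markov decomposition at $t_{n,k}$: write $B(t_{n,k}+s)=Y_k+W_k(s)$ with $Y_k=B(t_{n,k})$ and $W_k$ a Brownian motion independent of $Y_k$. Then $\{k\in\mathcal{F}_n\}\in\sigma(W_k)$, while L\'evy's modulus of continuity for $B$ combined with the $1/2$-H\"older bound on $f$ forces $\{k\in\mathcal{Z}_n\}\subseteq\{|Y_k-f(t_{n,k})|\leq C'\sqrt{n\,2^{-n}}\}\in\sigma(Y_k)$. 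Independence of $Y_k$ and $W_k$ then yields
\[
\PP(k\in\mathcal{F}_n\cap\mathcal{Z}_n)\leq\PP(|Y_k-f(t_{n,k})|\leq C'\sqrt{n\,2^{-n}})\cdot\PP(k\in\mathcal{F}_n)\lesssim\sqrt{n/k}\cdot 2^{-n(a-\epsilon)^2+o(n)},
\]
the first factor arising from the Gaussian density of $Y_k\sim N(0,k\,2^{-n})$.

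Summing over $k\in\{1,\dots,2^n\}$ gives $\Ew|\mathcal{F}_n\cap\mathcal{Z}_n|\lesssim\sqrt{n}\cdot 2^{n(1/2-(a-\epsilon)^2)+o(n)}$. Hence for any $\alpha>\max\{1/2-(a-\epsilon)^2,0\}$ the expected sum $\sum_{n\geq N}|\mathcal{F}_n\cap\mathcal{Z}_n|\cdot 2^{-n\alpha}$ tends to $0$ as $N\to\infty$ (and in the degenerate case $(a-\epsilon)^2>1/2$, a Borel--Cantelli argument shows the cover is eventually empty, giving dimension zero). Therefore $\dim(\zero(X)\cap E_{a-\epsilon}(B))\leq\alpha$ almost surely, and letting $\alpha$ and then $\epsilon$ decrease yields the claim. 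The main obstacle I anticipate is isolating the Orey--Taylor family $\mathcal{F}_n$ so that its membership is genuinely $\sigma(W_k)$-measurable while still delivering both the sharp exponent $(a-\epsilon)^2$ and the required covering of $E_{a-\epsilon}(B)$; once this clean splitting is in place, the Markov independence between $Y_k$ (carrying the zero-set information) and $W_k$ (carrying the fast-increment information) is precisely what converts the $1-a^2$ Orey--Taylor bound into the sharper $1/2-a^2$ bound on the intersection with $\zero(X)$.
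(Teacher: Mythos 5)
Your proposal is correct and follows essentially the same route as the paper: reduce to driftless $B$ via the H\"older bound on $f$, cover the fast times by small intervals on which a sup--increment event of probability about $2^{-na^2}$ occurs (the Orey--Taylor/Khoshnevisan--Shi covering, Lemma 3.1 of \cite{KS}), observe via L\'evy's modulus that an interval meeting $\zero(X)$ forces $|B(t_{n,k})-f(t_{n,k})|\lesssim\sqrt{n2^{-n}}$ (an event of probability $\lesssim\sqrt{n/k}$), and multiply the two probabilities by independence of the anchor value and the post-anchor increments before summing $k^{-1/2}$ to produce the extra factor $2^{n/2}$. The only differences are bookkeeping: the paper uses a $\beta^{-\eta j}$ mesh with parameters $\beta,\eta,\theta,\mu$ sent to their limits where you use dyadic intervals and an $\epsilon$/$o(n)$ formulation, and the measurability issue you flag as the main obstacle is resolved in the paper exactly as you anticipate, by making the covering intervals much shorter than the increment scale so that the sup--increment event depends only on the path after the left endpoint.
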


We will prove the upper 
using the method of \cite{KS} in section \ref{Upper bound - intersection result}
. A general lower bound for continuous functions can be given as well.

\begin{theorem}\label{thm: intersection fast times and zeroset - continuous functions}
Suppose $f \colon \mathbb{R}^+ \to \mathbb{R}$ is a continuous function, $X(t):= B(t)-f(t)$ and let $\zero(X):=\{ t\in(0,1] | X(t)= 0 \}$. For every $a\in(0,1]$
\[
\dim \Big\{ t\in\zero(X) \Big| \limsup_{h \downarrow 0} \frac{|X(t+h)-X(t)|}{\sqrt{2h\log{(1/h)}}} \geq a  \Big\} \geq \max \Big\{\frac{1}{2}-a^2, 0 
\Big\}
\]
with positive probability
.
\end{theorem}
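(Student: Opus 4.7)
The plan is to adapt the second-moment / Frostman-energy argument of Khoshnevisan and Shi in \cite{KS} for the analogous statement about undrifted Brownian motion, the only new feature being that one must check that a general continuous drift $f$ contributes only bounded multiplicative constants to all relevant Gaussian estimates and so leaves the critical exponent $1/2 - a^2$ intact.

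When $a \geq 1/\sqrt{2}$ the right-hand side is $0$ and it suffices to produce one point in the intersection, which follows from Theorem~\ref{thm: fast times lower bound general drift} together with the fact that $\PP(\zero(X) \neq \emptyset) > 0$ (consider $\{X(0)X(1)<0\}$ if $f(0)\neq 0$, otherwise $0\in\zero(X)$ trivially). So assume $a < 1/\sqrt{2}$ and fix $s < 1/2 - a^2$. Restrict attention to $I = [\eta, 1-\eta]$ on which $f$ is bounded; here $X(t)$ is Gaussian of variance $t$ and mean $-f(t)$, and uniformly for $t,u \in I$,
\begin{equation*}
c\,\epsilon \leq \PP(|X(t)| \leq \epsilon) \leq C\,\epsilon, \qquad \PP(|X(t)| \leq \epsilon,\, |X(u)| \leq \epsilon) \leq \frac{C\epsilon^2}{\sqrt{|t-u|}},
\end{equation*}
the constants depending only on $\eta$ and $\sup_I |f|$. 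The increment $X(t+h)-X(t)$ is Gaussian of variance $h$ and mean $-(f(t+h)-f(t))$, which by uniform continuity of $f$ is $o(\sqrt{h\log(1/h)})$ as $h\downarrow 0$, so the fast-time tail matches that of $B$ up to subexponential factors.

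Following \cite{KS}, set $h_n = 2^{-n}$, $\epsilon_n = \sqrt{h_n}$ and introduce the random measure
\begin{equation*}
\mu_n(dt) = \frac{1}{2\epsilon_n}\,\one_{\{|X(t)| \leq \epsilon_n\}}\,\one_{\{|X(t+h_n)-X(t)| \geq a\sqrt{2h_n\log(1/h_n)}\}}\,\one_I(t)\,dt.
\end{equation*}
The first indicator thickens $\zero(X)$, the second selects approximate $a$-fast points at scale $h_n$, and the drift only shifts Gaussian means by bounded or negligible amounts. Using the density and tail bounds above together with the independence of the increment on $[t,t+h_n]$ from $X(t)$ (modulo a deterministic drift correction), I would verify
\begin{equation*}
\Ew[\mu_n(I)] \geq c > 0, \qquad \Ew\!\iint_{I\times I} |t-u|^{-s}\,d\mu_n(t)\,d\mu_n(u) \leq C_s,
\end{equation*}
uniformly in $n$, by splitting the integral into the regimes $|t-u| < h_n$, $h_n \leq |t-u| \leq \sqrt{h_n}$, and $|t-u| > \sqrt{h_n}$ exactly as in \cite{KS}. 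The $s=0$ case of the second estimate gives $\Ew[\mu_n(I)^2] \leq C$, and hence by Paley-Zygmund $\PP(\mu_n(I) \geq c/2) \geq c'$ uniformly in $n$.

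The endgame is standard. By Prokhorov extract a subsequential weak limit $\mu$; on the event $\{\mu(I)>0\}$, which has positive probability by the previous bound, $\mu$ is supported on $\zero(X) \cap \{t : \limsup_{h \downarrow 0} |X(t+h)-X(t)|/\sqrt{2h\log(1/h)} \geq a\}$ (the indicators pass to the limit in the correct direction) and has finite $s$-energy for every rational $s < 1/2 - a^2$; Frostman's lemma gives dimension at least $s$ on this event, and letting $s \uparrow 1/2 - a^2$ completes the proof. The main technical obstacle is the two-point energy estimate, which rests on the joint Gaussian law of $(X(t), X(u), X(t+h_n)-X(t), X(u+h_n)-X(u))$; the crucial point making this tractable is that $f$ affects only the means of these four Gaussians, never their covariances, so the critical exponent $1/2 - a^2$, which is produced by the covariance structure alone, is inherited unchanged from the pure Brownian computation of \cite{KS}.
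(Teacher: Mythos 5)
Your outline has a genuine gap at its foundation: the claim that uniform continuity of $f$ forces $f(t+h)-f(t)=o(\sqrt{h\log(1/h)})$, so that the drift ``only shifts Gaussian means by bounded or negligible amounts'', is false. Uniform continuity gives a modulus $\omega(h)\to 0$ but says nothing about the rate; for $f$ \holder of exponent $1/4$, or the Cantor function of Proposition \ref{prop: fasttimeswithCantorfunction} with small $\gamma$, the mean shift $f(t+h_n)-f(t)$ exceeds $\sqrt{h_n\log(1/h_n)}$ by arbitrarily large factors at many $t$. This does not merely cost constants: the probability of $|X(t+h_n)-X(t)|\geq a\sqrt{2h_n\log(1/h_n)}$ then ranges, as $t$ varies, from the Brownian value $h_n^{a^2+o(1)}$ up to order $1$, so no single normalization of your $\mu_n$ makes both $\Ew[\mu_n(I)]\geq c$ and the energy bound hold uniformly (as written, with only the factor $1/(2\epsilon_n)$, $\Ew[\mu_n(I)]$ is of order $h_n^{a^2}$ for nice $f$ and tends to $0$; renormalizing by $h_n^{a^2}$ lets the second moment blow up wherever the drift increments are large). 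The remark that ``$f$ affects only means, never covariances'' is exactly where the difficulty hides: a mean shift comparable to the threshold changes a Gaussian tail probability by powers of $h_n$. The paper circumvents this with a sign-splitting device absent from your proposal: for each $h$ it partitions $[0,1]$ into $\SSS^{\pm}(h)$ according to the sign of $f(t+h)-f(t)$, keeps the half carrying at least $1/2$ of the Frostman measure, and asks for the one-sided Brownian event $B(t+h)-B(t)>a\sqrt{2h\log(1/h)}$ (resp.\ $<-a\sqrt{2h\log(1/h)}$) whose direction reinforces rather than cancels the drift, so that the $X$-increment dominates the $B$-increment and all probabilities are computed for $B$ alone.

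Two further points would need repair even granting the estimates. First, your endgame asserts that ``the indicators pass to the limit in the correct direction''; but $\mu_n$ only sees the increment at the single scale $h_n$, and a weak limit of such measures is not obviously carried by the set where $\limsup_{h\downarrow 0}|X(t+h)-X(t)|/\sqrt{2h\log(1/h)}\geq a$, which requires large increments along infinitely many scales at the same point. The paper instead proves non-emptiness of the intersection with every compact $E$ of dimension $>a^2+1/2$ (Paley--Zygmund applied to $J_\mu(h,a)$, the monotone open sets $\GG(a,h)\cap\zero_h(X)$, and the codimension Lemma \ref{lm: KS codimensionlemma}) and converts this into the dimension bound via $\overline{\mathrm{codim}}(M)+\dim(M)\geq 1$. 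Second, for $a\geq 1/\sqrt 2$ you argue from the separate non-emptiness of the fast-time set and of $\zero(X)$, which does not produce a common point; this case is either trivial under the convention $\dim\emptyset=0$ or needs the same intersection argument.
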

\section{Some first remarks on fast times of Brownian motion with variable drift}\label{firstremarkssection}

By the Cameron-Martin theorem (see Theorem 1.38 in \cite{MP} or Theorem 2.2 in Chapter 8 in \cite{RY}) we see that the Theorem of Orey and Taylor, see (\ref{thm: OT74}), holds as well if we replace Brownian motion by a function $f$ added to Brownian motion where $f$ is in the Cameron-Martin space $\mathbf{D}(I)$ (integrals of functions in $\mathbf{L}^2(I)$).
We will show that the same holds for any function $f$ which is locally $1/2$-H\"{o}lder continuous. Because all functions in $\mathbf{D}(I)$ are $1/2$-H\"{o}lder continuous, this is a stronger statement than the one implied by the Cameron-Martin theorem.

\begin{corollary}\label{cor: fasttimeswithhoelder}
Let $f \colon \mathbb{R}^+ \to \mathbb{R}$ be a locally $1/2$-\holder continuous function and let $X(t):= B(t)-f(t)$. Then, for every $a\in[0,1]$
\[
\dim \Big\{ t\in[0,1] \Big| \limsup_{h \downarrow 0} \frac{|X(t+h)-X(t)|}{\sqrt{2h\log{(1/h)}}} \geq a \Big\} = 1-a^2
\]
almost surely.
\end{corollary}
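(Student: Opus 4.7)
The plan is to obtain equality by pairing the general lower bound of Theorem \ref{thm: fast times lower bound general drift} with a matching upper bound tailored to the locally $1/2$-\holder setting. The lower bound $\dim \geq 1-a^2$ is immediate: Theorem \ref{thm: fast times lower bound general drift} is stated for arbitrary $f$, and our locally $1/2$-\holder $f$ is a special case, so no additional work is needed in this direction. (The case $a=0$ is trivial since the set is all of $[0,1]$.)

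For the upper bound the strategy is to show that adding a locally $1/2$-\holder function cannot create new fast times, that is,
\[
\Big\{ t\in[0,1] \colon \limsup_{h\downarrow 0} \tfrac{|X(t+h)-X(t)|}{\sqrt{2h\log(1/h)}} \geq a \Big\} \subseteq \Big\{ t\in[0,1] \colon \limsup_{h\downarrow 0} \tfrac{|B(t+h)-B(t)|}{\sqrt{2h\log(1/h)}} \geq a \Big\}.
\]
This inclusion follows from the triangle inequality together with the local \holder bound $|f(t+h)-f(t)| \leq C_t\sqrt{h}$, valid for $h$ in some neighborhood of $0$ (depending on $t$): the $f$-contribution to the normalized increment is bounded by $C_t/\sqrt{2\log(1/h)}$, which tends to $0$ as $h \downarrow 0$ and hence drops out in the limsup. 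Monotonicity of Hausdorff dimension together with the Orey--Taylor formula \eqref{thm: OT74} then yield $\dim \leq 1-a^2$ almost surely, closing the bound.

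I do not anticipate a genuine obstacle here; the corollary is essentially a packaging of Theorem \ref{thm: fast times lower bound general drift} with a one-line triangle-inequality argument. The only mild bookkeeping point is that the \holder constant $C_t$ depends on $t$, but since the limsup only probes arbitrarily small $h$ it suffices to have \holder control on some neighborhood of each $t$, which is exactly what local $1/2$-\holder continuity provides. All the genuine content lies upstream, in Theorem \ref{thm: fast times lower bound general drift}; the present corollary simply notes that for this class of drifts the lower bound there is sharp.
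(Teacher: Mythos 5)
Your proof is correct, and the upper-bound half is exactly the paper's argument: the local bound $|f(t+h)-f(t)|\le C_t\sqrt{h}$ makes the $f$-contribution $O(1/\sqrt{\log(1/h)})$, so it vanishes in the limsup and the fast times of $X$ are contained in those of $B$, whence $\dim\le 1-a^2$ by Orey--Taylor. Where you diverge is the lower bound: you import it from Theorem \ref{thm: fast times lower bound general drift}, whereas the paper stays entirely elementary and never invokes that theorem here. The paper simply runs your triangle-inequality computation in both directions --- the reverse triangle inequality gives $|X(t+h)-X(t)|\ge |B(t+h)-B(t)|-|f(t+h)-f(t)|$, and the same negligibility of the $f$-term then shows the two limsups are \emph{equal} for every $t$, so the fast-time sets of $X$ and $B$ coincide exactly and both bounds drop out of \eqref{thm: OT74} at once. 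Your route is logically sound (there is no circularity, since Theorem \ref{thm: fast times lower bound general drift} is proved independently by a limsup-fractal argument), but it leans on much heavier machinery than needed; the two-sided triangle inequality you already wrote down for the upper bound gives the lower bound for free. The trade-off is that your packaging makes transparent that the corollary is the case where the general lower bound is sharp, while the paper's version isolates the cleaner structural fact that a locally $1/2$-H\"older drift does not change the set of $a$-fast times at all.
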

\begin{proof}
By the definition of $1/2$-\holder continuity and the triangle inequality we get that for every $t\geq 0$,
\begin{align*}
\limsup_{h \downarrow 0} \frac{|B(t+h)-B(t)|}{\sqrt{2h\log{(1/h)}}} &= \limsup_{h \downarrow 0} \frac{|B(t+h)-B(t)| - |f(t+h)-f(t)|}{\sqrt{2h\log{(1/h)}}} \\ &\leq \limsup_{h \downarrow 0} \frac{|X(t+h)-X(t)|}{\sqrt{2h\log{(1/h)}}} \\ &\leq \limsup_{h \downarrow 0} \frac{|B(t+h)-B(t)| + |f(t+h)-f(t)|}{\sqrt{2h\log{(1/h)}}}
\\&= \limsup_{h \downarrow 0} \frac{|B(t+h)-B(t)|}{\sqrt{2h\log{(1/h)}}}.
\end{align*}
\end{proof}

Note that the statement of corollary \ref{cor: fasttimeswithhoelder} also holds if $f$ is not locally $1/2$-\holder continuous on a countable subset of $\RR^+$.

A natural question to ask is if we can perturb linear Brownian motion by a function such that the Hausdorff dimension of the set of $a$-fast times differs from the result (\ref{thm: OT74}). The following proposition gives a positive answer.
Also, this is an example for a function such that a strict inequality holds in Theorem \ref{thm: fast times lower bound general drift} (for some $a$).
More examples are given below.
\begin{proposition}\label{prop: fasttimeswithCantorfunction}
Let $f_\gamma \colon [0,1] \to [0,1]$ be a middle $(1-2\gamma)$-Cantor function with $\gamma <1/4$ and let $X_\gamma(t):= B(t)-f_\gamma(t)$. Then, for every $a\in[0,1]$
\begin{align*}
\dim \Big\{ t\in[0,1] \Big| \limsup_{h \downarrow 0} \frac{|X_\gamma(t+h)-X_\gamma(t)|}{\sqrt{2h\log{(1/h)}}} \geq a \Big\}  = \max\Big\{1-a^2, -\frac{\log2}{\log\gamma}\Big\},
\end{align*}
almost surely.
\end{proposition}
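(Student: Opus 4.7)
\emph{Proof plan.} Write $C_\gamma\subset[0,1]$ for the middle-$(1-2\gamma)$ Cantor set (on which $f_\gamma$ carries all of its variation) and let $F:=[0,1]\setminus C_\gamma$; set $\alpha:=-\log 2/\log\gamma$, so that the hypothesis $\gamma<1/4$ gives $\alpha<1/2$, and recall that $\alpha$ is both the Hausdorff dimension of $C_\gamma$ and the sharp H\"older exponent of $f_\gamma$. Let $A$ denote the set of $a$-fast times of $X_\gamma$ in $[0,1]$. Because $f_\gamma$ is locally constant on the open set $F$, for every $t\in F$ the increments of $X_\gamma$ agree with those of $B$ on some right-neighborhood of $0$, and so $A\cap F$ equals the set of $a$-fast times of $B$ lying in $F$. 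The upper bound is then immediate: by (\ref{thm: OT74}), $\dim(A\cap F)\le 1-a^2$ a.s., and trivially $\dim(A\cap C_\gamma)\le\dim C_\gamma=\alpha$, whence $\dim A\le\max(1-a^2,\alpha)$.

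For the lower bound I would prove $C_\gamma\subseteq A$ almost surely, which already yields $\dim A\ge\alpha$; and when $1-a^2>\alpha$, writing $A_B$ for the $a$-fast times of $B$ (of dimension $1-a^2$ a.s.\ by Orey--Taylor), the decomposition $A_B=(A_B\setminus C_\gamma)\cup(A_B\cap C_\gamma)$ combined with $\dim(A_B\cap C_\gamma)\le\alpha<1-a^2$ forces $\dim(A_B\setminus C_\gamma)=1-a^2$; since $A_B\setminus C_\gamma=A\cap F\subseteq A$ this gives $\dim A\ge 1-a^2$ as well. The inclusion $C_\gamma\subseteq A$ reduces to the following one-sided sharp lower bound on $f_\gamma$: for every $t\in C_\gamma$ there exist $h_n\downarrow 0^+$ and a universal $c>0$ with $|f_\gamma(t+h_n)-f_\gamma(t)|\ge c\,h_n^\alpha$. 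Granting this, L\'evy's uniform modulus of continuity gives $\sup_{t\in[0,1]}|B(t+h)-B(t)|=O(\sqrt{h\log(1/h)})$ as $h\downarrow 0$ almost surely, so by the reverse triangle inequality and $\alpha<1/2$,
\[
\limsup_{h\downarrow 0}\,\frac{|X_\gamma(t+h)-X_\gamma(t)|}{\sqrt{2h\log(1/h)}}\;\ge\;\lim_{n\to\infty}\frac{c\,h_n^\alpha-O(\sqrt{h_n\log(1/h_n)})}{\sqrt{2h_n\log(1/h_n)}}\;=\;+\infty,
\]
so every $t\in C_\gamma$ is an $a$-fast time of $X_\gamma$ for every value of $a$.

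To establish the one-sided bound on $f_\gamma$ I would argue directly from the self-similar structure. If $I_n$ is the level-$n$ Cantor interval of length $\gamma^n$ containing $t$, then $f_\gamma(I_n)=[k/2^n,(k+1)/2^n]$ for some $k$. When $f_\gamma(t)\le(2k+1)/2^{n+1}$ take $h_n$ to be the distance from $t$ to the right endpoint of $I_n$, contributing a jump of $f_\gamma$ at least $2^{-n-1}$ at cost $h_n\le\gamma^n$; when $f_\gamma(t)>(2k+1)/2^{n+1}$ one instead crosses the adjacent Cantor gap (of length $(1-2\gamma)\gamma^{n-1}$) and takes $h_n$ to be the distance to the right endpoint of the next level-$n$ Cantor interval, again picking up at least $2^{-n-1}$ at cost $h_n\lesssim\gamma^{n-1}$. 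In both cases $|f_\gamma(t+h_n)-f_\gamma(t)|/h_n^\alpha\gtrsim 1$. The main technical obstacle is precisely this one-sided case analysis: the symmetric two-sided estimate is essentially automatic from the sharpness of the H\"older exponent $\alpha$, but the $\limsup$ defining an $a$-fast time is restricted to $h\downarrow 0^+$, so the ``upper-half'' case demands a gap crossing rather than a within-interval move; the right endpoint $t=1$ is a harmless single point that can be excluded without affecting dimensions.
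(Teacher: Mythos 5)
Your overall architecture (split $[0,1]$ into $C_\gamma$ and its complement, reduce the complement to Orey--Taylor, show that Cantor points are fast times via a one-sided reverse H\"older bound on $f_\gamma$ with exponent $<1/2$ plus L\'evy's modulus, and combine with finite stability of Hausdorff dimension) matches the paper's, and the upper bound and the $1-a^2$ part of the lower bound are fine. The gap sits exactly where you locate the ``main technical obstacle'': Case 2 of the one-sided bound does not work as stated, and consequently the claim $C_\gamma\subseteq A$ is false. First, the exceptional set is not just $t=1$: at every right endpoint $t_0\neq 1$ of a construction interval, $f_\gamma$ is constant on $[t_0,t_0+\delta]$ for some $\delta>0$ (the adjacent gap), so the right increment of $f_\gamma$ vanishes identically near $0$ and $t_0$ is an $a$-fast time of $X_\gamma$ if and only if it is one of $B$, which almost surely fails for each fixed $t_0$ and each $a>0$. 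Second, and more seriously, in Case 2 the gap adjacent to $I_n$ on the right need not be the level-$n$ gap of length $(1-2\gamma)\gamma^{n-1}$: if $I_n$ is the right child of its parent, the nearest gap to the right of $I_n$ is the one bounding the smallest ancestor $I_m$ of which $I_n$ is a rightmost descendant, and it has length $(1-2\gamma)\gamma^{m-1}$ with $m$ possibly much smaller than $n$. The distance from $t$ to the right endpoint of the next level-$n$ interval is then of order $\gamma^{m-1}$, while the gain in $f_\gamma$ is still only of order $2^{-n}$, so the ratio $|f_\gamma(t+h_n)-f_\gamma(t)|/h_n^\alpha$ is only of order $2^{m-n}$, which is not bounded below along such levels.

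The fix is easy and preserves your structure: discard Case 2 entirely and use only those levels $n$ at which $t$ lies in the \emph{left} level-$(n+1)$ child of $I_n$. For every $t\in C_\gamma$ that is not a right endpoint of a construction interval there are infinitely many such $n$, and your Case 1 computation then gives $|f_\gamma(t+h_n)-f_\gamma(t)|\geq 2^{-n-1}\geq \tfrac{1}{2}h_n^\alpha$ with $0<h_n\leq\gamma^n$. Since the excluded set is countable, hence of dimension $0$, the conclusion $\dim A\geq\alpha$ survives. For comparison, the paper avoids the pointwise statement altogether: it removes from $C_\gamma$ the $f_\gamma$-preimages of small neighborhoods of the dyadic rationals, checks that what remains still carries at least half of the natural Cantor measure (hence has full dimension $\alpha$), and obtains the reverse H\"older bound there with a slightly smaller exponent $\beta=\log\gamma_1/(2\log\gamma)<1/2$; both routes end the same way.
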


Note here that $-\frac{\log2}{\log\gamma}$ is also both the Hausdorff dimension of the Cantor set and the \holder exponent of the Cantor function.

\begin{proof}
For $n>0$ we call the $n$-th approximation of the Cantor set $C_{\gamma,n}$, $\mathfrak{C}_{\gamma,n}$ the set of all connected components of $C_{\gamma,n}$, and $f_{\gamma,n}$ the corresponding $n$-th approximation of the Cantor function, see e.g. \cite{ABPR}, section 3 for precise definition.
Take an arbitrary $\gamma < \gamma_1 < 1/4$. 
There is an $n_0 >0$ such that $\sum_{n \geq n_0} (2\sqrt{\gamma_1})^n \leq 1/2$.
For $n \geq n_0$ consider the interval $J_{k,n}=[k2^{-n}-\gamma_1^{n/2}/2, k2^{-n}+\gamma_1^{n/2}/2]$ and define the set $M_{n_0}= \bigcup_{n \geq n_0} \bigcup_{0 \leq k \leq 2^{n}}J_{k,n}$.
Take $t \in C_\gamma \backslash f_\gamma^{-1}(M_{n_0})$ and any $s \ne t$ in the same connected component of the interior of $C_{\gamma,n_0}$.
The largest integer $\ell$ such that both $s$ and $t$ are contained in the same interval of $\mathfrak{C}_{\gamma,\ell}$ satisfies $\ell \geq n_0$. Moreover, $|f_\gamma(s)-f_\gamma(t)| \geq \gamma_1^{(\ell+1)/2}$ and $|s-t| \leq \gamma^{\ell}$.
We see that $t$ satisfies
\[
  \limsup_{h \downarrow 0} \frac{|f_\gamma(t+h)-f_\gamma(t)|}{h^\beta} > 0.
 \]
with $\beta = \frac{\log \gamma_1 }{2 \log \gamma} < 1/2$. Hence, $t$ is an $a$-fast time of the process $X_\gamma$.

Because $\sum_{n \geq n_0} (2\sqrt{\gamma_1})^n \leq 1/2$ note that for every $n$ holds $$|C_{\gamma,n} \backslash f_\gamma^{-1}(M_{n_0}) | \geq 1/2 |C_{\gamma,n}|.$$ Therefore
the Hausdorff dimension of the fast times of the process $X_\gamma$ on the set $C_\gamma \backslash f_\gamma^{-1}(M_{n_0})$ equals the Hausdorff dimension of the Cantor set (that is $-\frac{\log2}{\log\gamma}$).

The Hausdorff dimension of fast times on the set $[0,1]\backslash C_\gamma$, that is the union of open intervals where the function $f_\gamma$ is constant, is $1-a^2$.
Note, that the set $f_\gamma^{-1}(M_{n_0}) \cap C_\gamma$ has at most the Hausdorff dimension $-\frac{\log2}{\log\gamma}$. Then, by the definition of Hausdorff dimension we see that for two sets $A$ and $B$ it holds $\dim(A\cup B) = \sup\{ \dim A, \dim B \}$. The claim follows.
\end{proof}

Note that there are functions such that for all $a>0$ the Hausdorff dimension of the set of $a$-fast times of these functions added to Brownian motion is $1$ almost surely.
For instance, Loud in \cite{Loud} constructed functions which satisfy a certain local reverse H\"{o}lder property at each point (see also the construction in \cite{MarxPiranian}).
These functions are defined as $g(t) = \sum_{k=1}^\infty g_k(t)$ where $g_k(t) = 2^{-2A\alpha k}g_0(2^{2Ak}t)$, for $0 < \alpha < 1$, a positive integer $A$ such that $2A(1-\alpha) > 1$,
 and a continuous function $g_0$ which has value $0$ at even integers, value $1$ at odd integers and is linear at all other points.
It holds that there is a positive constant $c$ such that $|g(t+h) -g(t) | > c h^{\alpha}$ for infinitely many arbitrarily small $h>0$ (see Theorem of \cite{Loud}).
Therefore, if we choose $\alpha < 1/2$, then for every $a\geq 0$,
\begin{align*}
\dim \Big\{ t\in[0,1] \Big| \limsup_{h \downarrow 0} \frac{|(B-g)(t+h)-(B-g)(t)|}{\sqrt{2h\log{(1/h)}}} \geq a \Big\} =1,
\end{align*}
almost surely.

For fractional Brownian motion Khoshnevisan and Shi (\cite{KS}) proved the following analogue result to (\ref{thm: OT74}).

\begin{theorem}[\cite{KS}] \label{thm: KS}
Let $B^{(H)} \colon \mathbb{R}^+ \to \mathbb{R}$ be a fractional Brownian motion with Hurst index $H\in]0,1[$ and $B^{(H)}(0)=0$. For every $a\in(0,1]$
\[
\dim \Big\{ t\in[0,1] \Big| \limsup_{h \downarrow 0} \frac{|B^{(H)}(t+h)-B^{(H)}(t)|}{\sqrt{2}\cdot h^{H}\sqrt{\log{(1/h)}}} \geq a  \Big\} = 1-a^2
\]
almost surely.
\end{theorem}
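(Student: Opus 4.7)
The plan is to adapt the Orey--Taylor strategy from standard Brownian motion to fractional Brownian motion of Hurst index $H$, exploiting the fact that $B^{(H)}$ has stationary Gaussian increments with $\Var(B^{(H)}(t+h)-B^{(H)}(t)) = h^{2H}$. The normalising factor $h^H \sqrt{2\log(1/h)}$ is exactly tailored to this variance, so the Gaussian tail estimate gives
\[
\PP\bigl(|B^{(H)}(t+h)-B^{(H)}(t)| \geq a \cdot h^H \sqrt{2 \log(1/h)}\bigr) \asymp \frac{h^{a^2}}{\sqrt{\log(1/h)}},
\]
which is the same order as in the Brownian case. This is the reason the answer $1-a^2$ is unchanged.

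For the upper bound, fix $\eta>0$ and work on the dyadic scale $h_n = 2^{-n}$. A point $t$ is $a$-fast only if infinitely often some dyadic (or shifted-dyadic) interval $I_{k,n}$ of length $h_n$ satisfies $|B^{(H)}((k+1)h_n)-B^{(H)}(kh_n)| \geq (a-\eta) h_n^H \sqrt{2\log(1/h_n)}$. By the tail bound above and a union bound, the expected number of such intervals at level $n$ is at most $2^{n(1-(a-\eta)^2)}$. Covering the set of fast times by the union of these intervals for $n\geq N$ and summing the $s$-th powers of the diameters shows that the $s$-dimensional Hausdorff measure is finite almost surely for every $s > 1-(a-\eta)^2$; letting $\eta \downarrow 0$ gives $\dim \leq 1-a^2$.

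For the lower bound, I would construct a limsup random fractal inside the $a$-fast set by a branching procedure: at generation $n$, keep a dyadic interval $I_{k,n}$ if the corresponding $B^{(H)}$-increment is "fast". The expected number of surviving children per parent is $2 \cdot 2^{-a^2}>1$ for $a<1$, so a positive proportion of the tree survives. A standard energy/second-moment argument on a random measure supported on the limit set then gives $\dim \geq 1-a^2-\eta$ almost surely on survival, and a $0$-$1$ law upgrades this to almost sure lower bound on the whole fast set.

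The main obstacle, and the reason fBm is genuinely harder than the $H=1/2$ case, is the lack of independence of increments over disjoint intervals. The second-moment step requires bounding $\PP(A_{k,n}\cap A_{k',n})$ where $A_{k,n}$ is the event that $I_{k,n}$ is fast. For standard Brownian motion these events are independent once the intervals are disjoint; for fBm one must show that the conditional variance of one increment given another well-separated increment is $h_n^{2H}(1+o(1))$, so that $\PP(A_{k,n}\cap A_{k',n}) \leq (1+o(1))\, \PP(A_{k,n})\PP(A_{k',n})$ for $|k-k'| \gg 1$. This requires the explicit covariance structure $\Ew[B^{(H)}(s)B^{(H)}(t)] = \tfrac12(|s|^{2H}+|t|^{2H}-|s-t|^{2H})$ and a careful Taylor expansion; closing the energy estimate against these correlation corrections is where the bulk of the technical work lies.
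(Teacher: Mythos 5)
The paper does not prove this statement; it is quoted verbatim from Khoshnevisan and Shi \cite{KS}, so there is no internal proof to compare against. Judged on its own, your sketch has the right architecture (first-moment covering for the upper bound, a limsup-fractal/second-moment argument for the lower bound) and correctly identifies the real fBm-specific difficulty, namely the dependence of increments over disjoint intervals. But the upper bound as written has a genuine gap in the discretization step. If $t\in I_{k,n}$ is $a$-fast via an increment $B^{(H)}(t+h)-B^{(H)}(t)$ with $h\asymp h_n=2^{-n}$, you cannot conclude that the endpoint increment $B^{(H)}((k+1)h_n)-B^{(H)}(kh_n)$ is $(a-\eta)$-fast: the error $|B^{(H)}(t)-B^{(H)}(kh_n)|$ is controlled only by the modulus of continuity at scale $|t-kh_n|\leq h_n$, which is itself of order $h_n^H\sqrt{\log(1/h_n)}$ --- the same order as the threshold, with a constant that can swamp $\eta$. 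Shifted-dyadic intervals of the same length do not fix this. The standard repairs, which are exactly what \cite{KS} and Section \ref{Upper bound - intersection result} of this paper use, are either (i) to cover with the oscillation events $\FF(b,h)=\{t:\sup_{t\le s\le t+h}|B(s)-B(t)|\ge b\sqrt{2h\log(1/h)}\}$ and invoke a maximal-inequality estimate of the form $\PP(I\cap\FF(b,h)\neq\emptyset)\le h^{b^2(1-\varepsilon)}$ (Lemma 3.1 of \cite{KS}, which for fBm is itself nontrivial), or (ii) to use a grid of mesh $h_n^{\eta}$ with $\eta>1$, so that the relocation error becomes $o\bigl(h_n^H\sqrt{\log(1/h_n)}\bigr)$, at the cost of $2^{\eta n}$ intervals per level; one then lets $\eta\downarrow 1$.

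On the lower bound, your branching heuristic (``offspring mean $2\cdot 2^{-a^2}>1$'') does not match the structure of the fast set: the selection probability at level $n$ is $p_n\asymp 2^{-na^2}\to 0$, and nested increments are not independent even for $H=1/2$, so this is not a Galton--Watson tree in the sense of Hawkes. The correct framework is the limsup random fractal theorem of Khoshnevisan--Peres--Xiao (equivalently Theorem 10.6 of \cite{MP}, which this paper applies in Section \ref{section lower bound general continuous functions without intersection}): one needs $\Var(M_n(I))\le \eta_n\,\Ew (M_n(I))$, and for fBm the required near-independence of $L(I_1)$ and $L(I_2)$ for well-separated $I_1,I_2$ must be extracted from the covariance $\tfrac12(|s|^{2H}+|t|^{2H}-|s-t|^{2H})$ (local nondeterminism), as you indicate. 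That part of your plan is sound in outline, but as it stands it is a program rather than a proof, and the upper bound needs the repair described above before the argument closes.
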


As before we can extend this result.

\begin{corollary}
Let $B^{(H)} \colon \mathbb{R}^+ \to \mathbb{R}$ be a fractional Brownian motion with Hurst index $H\in]0,1[$ and $B^{(H)}(0)=0$.

(i)Let $f^{(H)} \colon \mathbb{R}^+ \to \mathbb{R}$ be a locally $H$-\holder continuous function and let $X^{(H)}(t):= B^{(H)}(t)-f^{(H)}(t)$. Then, for every $a\in[0,1]$
\[
\dim \Big\{ t\in[0,1] \Big| \limsup_{h \downarrow 0} \frac{|X^{(H)}(t+h)-X^{(H)}(t)|}{\sqrt{2}\cdot h^{H}\sqrt{\log{(1/h)}}} \geq a \Big\} = 1-a^2
\]
almost surely.

(ii)Let $f_\alpha \colon [0,1] \to [0,1]$ be a middle $\alpha$-Cantor function with $\alpha \in(0,1)$ and let $X^{(H)}_\alpha(t):= B^{(H)}(t)-f_\alpha(t)$. Then, for every $a\in[0,1]$, and every $\alpha > 1 - 2^{1- \frac{1}{H}}$
\begin{multline*}
\dim \Big\{ t\in[0,1] \Big| \limsup_{h \downarrow 0} \frac{|X^{(H)}_\alpha(t+h)-X^{(H)}_\alpha(t)|}{\sqrt{2}\cdot h^{H}\sqrt{\log{(1/h)}}} \geq a \Big\} \\ = \max\Big\{1-a^2, \frac{\log2}{\log2-\log(1-\alpha)}\Big\}
\end{multline*}
almost surely.
\end{corollary}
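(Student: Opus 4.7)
The plan is to treat the two parts separately, in each case adapting an argument already given for standard Brownian motion to the fractional setting, with the normalizer $\sqrt{2h\log(1/h)}$ replaced by $\sqrt{2}\,h^{H}\sqrt{\log(1/h)}$ and Theorem~\ref{thm: KS} playing the role of~(\ref{thm: OT74}).

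For part (i) I would simply copy the chain of inequalities used in the proof of Corollary~\ref{cor: fasttimeswithhoelder}. The triangle inequality gives
\[
\Big|\,|X^{(H)}(t+h)-X^{(H)}(t)| - |B^{(H)}(t+h)-B^{(H)}(t)|\,\Big| \le |f^{(H)}(t+h)-f^{(H)}(t)|.
\]
Local $H$-H\"older continuity of $f^{(H)}$ gives $|f^{(H)}(t+h)-f^{(H)}(t)|\le C h^{H}$ for small $h$, so dividing by $\sqrt{2}\,h^{H}\sqrt{\log(1/h)}$ yields a quantity tending to $0$ as $h\downarrow 0$. The limsup for $X^{(H)}$ therefore coincides with the limsup for $B^{(H)}$ at every $t$, and Theorem~\ref{thm: KS} finishes the argument.

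For part (ii) the strategy mirrors Proposition~\ref{prop: fasttimeswithCantorfunction}: decompose $[0,1]=\big([0,1]\setminus C_{\alpha}\big)\cup C_{\alpha}$, where $C_{\alpha}$ is the middle-$\alpha$ Cantor set. On the open complement $[0,1]\setminus C_{\alpha}$ the Cantor function $f_{\alpha}$ is locally constant, hence locally $H$-H\"older, so part (i) applied on each complementary interval gives fast-time dimension $1-a^{2}$. On $C_{\alpha}$ the ratio parameter is $\gamma=(1-\alpha)/2$, the Cantor function has H\"older exponent $\beta=\log 2/(-\log\gamma)$, and the assumption $\alpha>1-2^{1-1/H}$ rewrites as $\gamma<2^{-1/H}$, equivalently $\beta<H$. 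Following the construction in Proposition~\ref{prop: fasttimeswithCantorfunction} I would fix some $\gamma_{1}$ with $\gamma<\gamma_{1}<2^{-1/H}$, remove the small ``dyadic'' exceptional set $f_{\alpha}^{-1}(M_{n_{0}})$ so that a positive proportion of each Cantor approximation survives, and verify that for every remaining $t\in C_{\alpha}$ one has
\[
\limsup_{h\downarrow 0}\frac{|f_{\alpha}(t+h)-f_{\alpha}(t)|}{h^{\beta'}}>0
\]
for some $\beta'<H$. Since $h^{\beta'}/(h^{H}\sqrt{\log(1/h)})\to\infty$, each such $t$ is automatically an $a$-fast time of $X^{(H)}_{\alpha}$ for every $a$. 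This shows the fast-time set on $C_{\alpha}$ contains a subset of full Hausdorff dimension $\log 2/(\log 2-\log(1-\alpha))$. Taking the supremum of the two dimensions through the elementary identity $\dim(A\cup B)=\max\{\dim A,\dim B\}$ gives the stated maximum, and the matching upper bound on $C_\alpha$ is immediate from $\dim C_\alpha=\log 2/(\log 2-\log(1-\alpha))$.

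The main technical obstacle is the same as in Proposition~\ref{prop: fasttimeswithCantorfunction}: one must avoid the ``dyadic'' points of the Cantor function where consecutive plateau values are attained from both sides, and quantify the two-sided growth estimate $|f_{\alpha}(s)-f_{\alpha}(t)|\ge\gamma_{1}^{(\ell+1)/2}$ on the relevant length scale $|s-t|\le\gamma^{\ell}$. Once this is set up exactly as in the Brownian case, the new threshold $\alpha>1-2^{1-1/H}$ is precisely what is needed for the induced H\"older exponent $\beta'$ of $f_{\alpha}$ to lie strictly below $H$, which is the only place the index $H$ enters non-trivially.
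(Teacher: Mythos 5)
Your proposal is correct and takes essentially the same route as the paper, whose entire proof of this corollary is the remark that the arguments of Corollary \ref{cor: fasttimeswithhoelder} and Proposition \ref{prop: fasttimeswithCantorfunction} carry over verbatim with $\sqrt{h}$ replaced by $h^{H}$ and Theorem \ref{thm: KS} playing the role of (\ref{thm: OT74}). In particular, your identification of the hypothesis $\alpha>1-2^{1-1/H}$ as exactly the condition $\gamma<2^{-1/H}$, i.e.\ that the \holder exponent of the Cantor function drops below $H$, is precisely the one place where the Hurst index enters the adapted argument.
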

\begin{proof}
Analogously to the proofs of Corollary \ref{cor: fasttimeswithhoelder} and Proposition \ref{prop: fasttimeswithCantorfunction}.
\end{proof}

As we have already mentioned, Khoshnevisan and Shi (\cite{KS}) also looked at the intersection set of $a$-fast times and the zero set of Brownian motion (see \ref{thm: KS2}). Unfortunately, it is not known whether an analogue statement holds for fractional Brownian motion.
(The proof cannot be adapted for fractional Brownian motion with $H \neq 1/2$ since the increments of the process are not independent.)

\section{Theorem \ref{thm: intersection fast times and zeroset}: Upper bound} \label{Upper bound - intersection result}
First we denote the set of $a$-fast times for every $a\in(0,1]$ by $F(a)$, that is
\[
F(a) := \Big\{ t\in[0,1] | \limsup_{h \downarrow 0} \frac{|X(t+h)-X(t)|}{\sqrt{2h\log{(1/h)}}} \geq a  \Big\}.
\]
By the proof of corollary \ref{cor: fasttimeswithhoelder} we see that,
\[
F(a) = \Big\{ t\in[0,1] | \limsup_{h \downarrow 0} \frac{|B(t+h)-B(t)|}{\sqrt{2h\log{(1/h)}}} \geq a  \Big\}.
\]
Further we define for every $a\in(0,1]$ and $h>0$,
\[
\FF (a,h) := \Big\{ t\in[0,1] | \sup_{t\leq s\leq t+h} |B(s)-B(t)| \geq a{\sqrt{2h\log{(1/h)}}}  \Big\}.
\]
Then, for all $0<b < a$, we have that $F(a) \subset \bigcap_{h>0}\bigcup_{0<\delta<h} \FF(b,\delta)$. Now let $I_{k,j}^\eta := [k\beta^{-\eta j}, (k+1)\beta^{-\eta j}]$ for any $\beta, \eta > 1$, all $j \geq 1$, and every integer $0 \leq k < \beta ^{\eta j}$.
For all $t\in \FF(b,\delta)$ it holds for $\delta <h <1$ with $\beta^{-j} \leq \delta \leq \beta^{1-j}$ that
\begin{multline}
 \sup_{t\leq s\leq t+\beta^{1-j}} |B(s)-B(t)| \geq b \beta^{-j/2} {\sqrt{2\log{(\beta^{j-1})}}} \\=  b \beta^{-1/2} \beta^{-(j-1)/2} {\sqrt{2\log{(\beta^{j-1})}}}.
\end{multline}
It follows $t\in \FF(b\beta^{-1/2},\beta^{1-j})$. We fix $\beta, \eta > 1$, $\theta \in ]0,1[$, then we get for any integer $i\geq 1$,
\[
F(a) \subset \bigcup_{j\geq i}\bigcup_{k\geq 1} I_{k,j}^\eta \cap \FF(\theta a\beta^{-1/2},\beta^{1-j}).
\]
%

$f$ is a $1/2$-\holder continuous function, that is $|f(t)-f(s)| \leq c_0 |t-s|^{1/2}$ for some $c_0>0$ and all $s,t \in [0,1]$. Now we will bound the probability of the event $ |B(k \beta^{-\eta j})  - f(k\beta^{-\eta j})| \leq c_1 \sqrt{\eta j \beta^{-\eta j} \log(\beta)}$ from above with $c_1:=\max\{2c_0,2\sqrt{2}\}$.
By the scaling property of Brownian motion we get,
\begin{align*}
\PP \big\{  |B(k &\beta^{-\eta j})  - f(k\beta^{-\eta j})| \leq c_1 \sqrt{\eta j \beta^{-\eta j} \log(\beta)}\big\} \\
&= \PP \big\{  B(k\beta^{-\eta j})\in [ -c_1 \sqrt{\eta j \beta^{-\eta j} \log(\beta)} + f(k\beta^{-\eta j}), \\
& \ \ \ \ \ \ \  \ \ \ \ \ \ \ \ \ \ \ \ \ \  \ \ \ \ \ \ \ \ \ \ \ \ \ \ \ \ \ \ \ \ \ \ \  c_1 \sqrt{\eta j \beta^{-\eta j} \log(\beta)} + f(k\beta^{-\eta j}) ]\big\} \\
&= \PP \big\{  B(1) \in [ -c_1 k^{-1/2} \sqrt{\eta j  \log(\beta)} + f(k\beta^{-\eta j})k^{-1/2}\beta^{\eta j/2}, \\
& \ \ \ \ \ \ \ \ \ \ \ \ \ \ \ \ \ \ \ \ \ \ \ \ \ \ \ \ \ \ \ c_1 k^{-1/2}\sqrt{\eta j \log(\beta)} + f(k\beta^{-\eta j})k^{-1/2}\beta^{\eta j/2} ]\big\}.
\end{align*}

We obtain $0 \leq |f(k\beta^{-\eta j})|k^{-1/2}\beta^{\eta j/2} \leq c_0$.
By symmetry and the unimodality property of the normal distribution, we get that
\begin{align}\label{eq: conditional}
\PP \big\{ |B(k \beta^{-\eta j})   - f(k & \beta^{-\eta j})| \leq c_1 \sqrt{\eta j \beta^{-\eta j} \log(\beta)}\big\} \nonumber\\
&\leq \PP \big\{  B(1) \in [ -c_1 k^{-1/2} \sqrt{\eta j  \log(\beta)}, c_1 k^{-1/2}\sqrt{\eta j \log(\beta)}]\big\} \nonumber\\
&\leq 2c_1k^{-1/2}\sqrt{\eta j \log(\beta)}.
\end{align}

Also, note that by Levy's modulus of continuity there exists a finite random variable $K$, depending on $\eta$ and $\beta$,
such that for all $j \geq K$ almost surely $\one_{\{ |B(k \beta^{-\eta j})  - f(k\beta^{-\eta j})| \leq c_1 \sqrt{\eta j \beta^{-\eta j} \log(\beta)} \}} \geq \one_{\{ I_{k,j}^\eta \cap \zero(X) \neq \emptyset \}}$.
Therefore, 
\[
F(a) \cap \zero(X) \subset \bigcup_{j\geq i}\bigcup_{\substack{k: |B(k \beta^{-\eta j})  - f(k\beta^{-\eta j})| \\ \leq c_1 \sqrt{\eta j \beta^{-\eta j} \log(\beta)}}  } I_{k,j}^\eta \cap \FF(\theta a\beta^{-1/2},\beta^{1-j}).
\]
The next step is to show that this is a good covering. With (\ref{eq: conditional}) we get, for any $\gamma > 0$,
\begin{align*}
 \sum_{j\geq i}  \sum_{0 \leq k < \beta^{\eta j }} &|I_{k,j}^\eta|^{\gamma} \PP(I_{k,j}^\eta \cap \FF(\theta a\beta^{-1/2},\beta^{1-j}) \neq \emptyset, I_{k,j}^\eta \cap \zero(X) \neq \emptyset) \\
 &\leq \sum_{j\geq i}\sum_{0 \leq k < \beta^{\eta j }} |I_{k,j}^\eta|^{\gamma} \PP\big(I_{k,j}^\eta \cap \FF(\theta a\beta^{-1/2},\beta^{1-j}) \neq \emptyset, \\
& \ \ \ \ \ \ \ \ \ \ \ \ \ \ \ \ \ \ \ \ \ \ \ \ \ \ \ |B(k \beta^{-\eta j})  - f(k\beta^{-\eta j})| \leq c_1 \sqrt{\eta j \beta^{-\eta j} \log(\beta)} \big) \\
 &\leq \sum_{j\geq i}\sum_{0 \leq k < \beta^{\eta j }} |I_{k,j}^\eta|^{\gamma} \PP(I_{k,j}^\eta \cap \FF(\theta a\beta^{-1/2},\beta^{1-j}) \neq \emptyset)  \\
& \ \ \ \ \ \ \ \ \ \ \ \ \ \ \ \ \ \ \ \ \ \cdot \PP\big(|B(k \beta^{-\eta j})  - f(k\beta^{-\eta j})| \leq c_1 \sqrt{\eta j \beta^{-\eta j} \log(\beta)} \big),
\end{align*}
where we used the independence of increments of Brownian motion in the last step.

In order to bound $\PP(I_{k,j}^\eta \cap \FF(\theta a\beta^{-1/2},\beta^{1-j}) \neq \emptyset)$ from above we will need the following
Lemma.
\begin{lemma}[see Lemma 3.1. of \cite{KS}]
 For all $b>0$, $0 < \varepsilon < 1$, $\eta >1$, and all $\beta > 1$, there is a $ 2 \leq J < \infty$ depending on $\varepsilon, \eta, b$ and $\beta$ such that for all
 $j \geq J$ and all $k\geq 0$,
$$\PP(I_{k,j}^\eta \cap \FF(b,\beta^{-j}) \neq \emptyset) \leq \beta^{-b^2(1-\varepsilon)j}.$$
 \end{lemma}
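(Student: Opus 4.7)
The plan is to reduce the event $\{I_{k,j}^\eta \cap \FF(b,\beta^{-j}) \neq \emptyset\}$ to an oscillation statement about Brownian motion started at the left endpoint $t_0 := k\beta^{-\eta j}$ of $I_{k,j}^\eta$, and then apply the reflection principle with a Gaussian tail bound. The key observation is that because $\eta > 1$, the length $|I_{k,j}^\eta| = \beta^{-\eta j}$ is negligible compared to the window $\beta^{-j}$ over which the fast increment lives, so almost all of the ``budget'' $b$ can be put onto the displacement on the long scale $\beta^{-j}$, treating the motion over $I_{k,j}^\eta$ as a subcritical correction.

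For any $t \in I_{k,j}^\eta$ and $s \in [t, t+\beta^{-j}]$, both points lie in $[t_0,\, t_0 + \beta^{-j} + \beta^{-\eta j}]$, and the triangle inequality gives $|B(s)-B(t)| \leq |B(s) - B(t_0)| + |B(t_0) - B(t)|$. Setting
\[
M_1 := \sup_{u \in [t_0,\, t_0 + \beta^{-j} + \beta^{-\eta j}]} |B(u) - B(t_0)|, \qquad M_2 := \sup_{t \in I_{k,j}^\eta} |B(t) - B(t_0)|,
\]
the event in question is contained, for any $\delta \in (0,1)$ to be chosen, in
\[
\{M_1 \geq (1-\delta)b\sqrt{2j\beta^{-j}\log\beta}\} \cup \{M_2 \geq \delta b \sqrt{2j\beta^{-j}\log\beta}\}.
\]

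By the reflection principle, $\PP(\sup_{u\in[0,h]} |B(u)| \geq \lambda) \leq 4 e^{-\lambda^2/(2h)}$. Applying this to each of $M_1$ and $M_2$ yields
\[
\PP\bigl(M_1 \geq (1-\delta)b\sqrt{2j\beta^{-j}\log\beta}\bigr) \leq 4\, \beta^{-\frac{(1-\delta)^2 b^2 j}{1 + \beta^{-(\eta-1)j}}},
\]
\[
\PP\bigl(M_2 \geq \delta b \sqrt{2j\beta^{-j}\log\beta}\bigr) \leq 4\exp\bigl(-\delta^2 b^2 j (\log\beta)\, \beta^{(\eta-1)j}\bigr),
\]
the second of which decays super-exponentially in $j$ since $\eta > 1$.

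Finally, I choose $\delta$ small so that $(1-\delta)^2 \geq 1 - \varepsilon/2$, and then take $J$ large enough that for $j \geq J$ the denominator correction $1/(1+\beta^{-(\eta-1)j})$ contributes a further factor arbitrarily close to $1$, while the leading constant $4$ together with the negligible $M_2$ term are absorbed into an additional small shave of the exponent; this yields the desired bound $\beta^{-b^2(1-\varepsilon)j}$ uniformly in $k$. The main obstacle is producing the sharp exponent $b^2(1-\varepsilon)$ rather than the $b^2/4$ one would get from a crude $\sup$--$\sup$ comparison via a single application of the triangle inequality. The sharpness relies crucially on $\eta>1$: it is precisely because $|I_{k,j}^\eta|$ is much smaller than $\beta^{-j}$ that the $M_2$ contribution can be kept below any fixed threshold $\delta b\sqrt{2j\beta^{-j}\log\beta}$ with overwhelming probability, allowing the full factor $b^2$ (up to $\varepsilon$) to be spent on $M_1$.
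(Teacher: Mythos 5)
Your proof is correct. Note that the paper itself does not prove this lemma at all --- it is imported verbatim as Lemma 3.1 of \cite{KS} --- so there is no in-paper argument to compare against; what you have written is a valid self-contained proof for the Brownian case. Your decomposition is the standard one and it works: writing $t_0=k\beta^{-\eta j}$, the triangle inequality gives $M_1+M_2\geq b\sqrt{2j\beta^{-j}\log\beta}$ on the event $\{I_{k,j}^\eta\cap\FF(b,\beta^{-j})\neq\emptyset\}$, the reflection principle plus the Gaussian tail bound give exactly the two exponents you state, the $M_2$ term is super-exponentially small because $\eta>1$, and the choice of $\delta$ with $(1-\delta)^2\geq 1-\varepsilon/2$ followed by taking $J$ large absorbs the constant $4$ and the correction $1+\beta^{-(\eta-1)j}$ into the remaining $\varepsilon/2$ of slack. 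Uniformity in $k$ is immediate from stationarity of increments. (The cited result in \cite{KS} is stated for fractional Brownian motion, where the reflection principle is unavailable and a maximal inequality for Gaussian processes must be used instead; your argument exploits the Markov/reflection structure available here, which is a legitimate simplification for the case actually needed in this paper.)
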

Hence, we obtain that for all $\mu\in]0,1[$ there is a $\infty>J\geq 2$ depending on $\mu,\eta, a, \beta$ and $\theta$ such that for $j\geq J$ and all $k>0$, $\PP(I_{k,j}^\eta \cap \FF(\theta a\beta^{-1/2},\beta^{1-j}) \neq \emptyset)$ is bounded from above by $\beta^{-\theta^2 a^2 \beta^{-1}(1-\mu)(j-1)}$. Note that $\beta^{-\theta^2 a^2 \beta^{-1}(1-\mu)(j-1)} \leq \beta^{-\theta^2 a^2 \beta^{-2}(1-\mu)j}$ for large enough $j$.
Thus, for large enough $i$,
\begin{align*}
\sum_{j\geq i}  \sum_{0 \leq k < \beta^{\eta j }} &|I_{k,j}^\eta|^{\gamma} \PP(I_{k,j}^\eta \cap \FF(\theta a\beta^{-1/2},\beta^{1-j}) \neq \emptyset)  \\
& \ \ \ \ \ \ \ \ \ \ \ \ \ \ \ \ \ \ \ \cdot \PP\big(|B(k \beta^{-\eta j})  - f(k\beta^{-\eta j})| \leq c_1 \sqrt{\eta j \beta^{-\eta j} \log(\beta)} \big) \\
&\leq \sum_{j\geq i} \beta^{-\eta \gamma j} \beta^{-\theta^2 a^2 \beta^{-2} (1-\mu)j} \Big(1+ \sum_{k=1}^{\beta^{\eta j }-1} 2c_1k^{-1/2}\sqrt{\eta j \log(\beta)} \Big)\\
&\leq \sum_{j\geq i}\beta^{-\eta \gamma j} \beta^{-\theta^2 a^2 \beta^{-2} (1-\mu)j} \big(\beta^{\eta j/2 }\cdot 2c_1\sqrt{\eta j \log(\beta)} +1\big).
\end{align*}
That means, if $\eta \gamma - \eta/2 + \theta^2 a^2 \beta^{-2}(1-\mu) > 0$, then almost surely
\begin{multline*}
\lim_{i \rightarrow \infty}\sum_{j\geq i} \sum_{0 \leq k < \beta^{\eta j }} |I_{k,j}^\eta|^{\gamma} \PP(I_{k,j}^\eta \cap \FF(\theta a\beta^{-1/2},\beta^{1-j}) \neq \emptyset)  \\
\cdot \PP\big(|B(k \beta^{-\eta j})  - f(k\beta^{-\eta j})| \leq c_1 \sqrt{\eta j \beta^{-\eta j} \log(\beta)} \big) = 0.
\end{multline*}
By letting $\mu \downarrow 0$, $\theta \uparrow 1$, $\beta \downarrow 1$ and $\eta \downarrow 1$ the claim follows.

\begin{flushright} $\blacksquare$ \end{flushright}

\section{Proof of Theorem \ref{thm: intersection fast times and zeroset - continuous functions} } \label{Lower bound - intersection result}

 In order to prove Theorem \ref{thm: intersection fast times and zeroset - continuous functions} we will give a proof of the following theorem which is an analogue of Theorem 8.1. of \cite{KS}. The statement of Theorem \ref{thm: compact set intersection} might be of independent interest.

 \begin{theorem}\label{thm: compact set intersection}
 Let $E \subset [0,1]$ be a compact set. If
 $\dim(E)> a^2 + 1/2$, then the set
 \[
 \Big\{ t\in\zero(X)\cap E  \Big| \limsup_{h \downarrow 0} \frac{|X(t+h)-X(t)|}{\sqrt{2h\log{(1/h)}}} \geq a  \Big\}
 \]
  is non-empty with positive probability.
 \end{theorem}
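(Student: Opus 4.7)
The plan is to follow the Frostman--Kahane second-moment method of Theorem 8.1 of \cite{KS}, adapted to accommodate the continuous drift $f$. Fix $s$ with $a^2+\tfrac12 < s < \dim(E)$; by Frostman's lemma there is a Borel probability measure $\nu$ supported on $E$ with
\[
\iint |u-v|^{-s}\,d\nu(u)\,d\nu(v) < \infty.
\]
By discarding a neighbourhood of the origin I may further assume $\mathrm{supp}(\nu)\subset[\eps,1]$ for some $\eps>0$, without affecting the hypothesis.

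Next I would fix sequences $b_n\uparrow a$ and $h_n\downarrow 0$ and define the random density
\[
\Phi_n(t) := C_n\,\one_{\{|X(t)|\leq\sqrt{h_n}\}}\,\one_{\{|X(t+h_n)-X(t)|\geq b_n\sqrt{2h_n\log(1/h_n)}\}},
\]
with $C_n$ of order $h_n^{-(1/2+b_n^2)}\sqrt{\log(1/h_n)}$, and form $d\mu_n(t) := \Phi_n(t)\,d\nu(t)$ on $E$. The first moment is controlled via the Markov property at time $t$, which renders the two indicators conditionally independent: the first factor has probability $\asymp\sqrt{h_n}$ because the density of $B(t)$ at $f(t)$ is bounded above and below for $t\in[\eps,1]$, and the second factor has probability of order at least $h_n^{b_n^2}/\sqrt{\log(1/h_n)}$, extracted from a case split on the size of the drift increment $|f(t+h_n)-f(t)|$ relative to $\sqrt{h_n\log(1/h_n)}$. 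Consequently $\Ew[\mu_n([0,1])] \geq c_1 > 0$ uniformly in $n$.

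For the second moment I would expand
\[
\Ew[\mu_n([0,1])^2] = \iint \Ew[\Phi_n(u)\Phi_n(v)]\,d\nu(u)\,d\nu(v),
\]
split the integrand according to $|u-v|<h_n$ versus $|u-v|\geq h_n$, and apply the Markov property at $\min(u,v)$ together with Gaussian small-ball and tail bounds to obtain $\Ew[\Phi_n(u)\Phi_n(v)] \leq C_0\,|u-v|^{-s}$ for $|u-v|$ small, the slack $s>a^2+\tfrac12$ absorbing the natural $(a^2+\tfrac12)$-singular Gaussian exponent along with logarithmic corrections. Finite $s$-energy of $\nu$ then gives $\Ew[\mu_n([0,1])^2]\leq c_2<\infty$ uniformly, and the Paley--Zygmund inequality yields $\PP(\mu_n([0,1])>c_1/2)\geq c_1^2/(4c_2) > 0$ for every $n$. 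By tightness, a weakly convergent subsequence $\mu_{n_k}\to\mu_\infty$ exists, with $\mu_\infty$ non-trivial on an event of positive probability. For any $t\in\mathrm{supp}(\mu_\infty)$, points $t_k \in \mathrm{supp}(\mu_{n_k})$ may be chosen sufficiently close to $t$ (using L\'evy's modulus of continuity for $B$ and uniform continuity of $f$) that the defining bounds transfer to $t$: $X(t)=0$ by continuity of $X$, and $\limsup_{h\downarrow 0}|X(t+h)-X(t)|/\sqrt{2h\log(1/h)} \geq \lim b_{n_k} = a$.

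The main obstacle I expect is the second-moment estimate in the regime $h_n\leq|u-v|\leq 1$: in the driftless case of \cite{KS} the bound $|u-v|^{-(a^2+1/2)}$ follows from an explicit Gaussian computation for the joint law of $(B(u),B(u+h_n)-B(u),B(v),B(v+h_n)-B(v))$, whereas here the joint law is a deterministic $(u,v)$-dependent shift of this Gaussian, and one must verify uniformly for $u,v\in[\eps,1]$ that the shifts alter the Gaussian small-ball and tail probabilities only by bounded multiplicative factors that can be absorbed into $C_0$. A secondary care point, handled cleanly by taking $b_n\uparrow a$ inside the construction rather than after the fact, is the passage from ``fast at rate $b_n$ at scale $h_{n_k}$'' to the desired $\limsup\geq a$ at the limit point.
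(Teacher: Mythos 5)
Your overall architecture (Frostman measure, random measures weighted by a small-ball indicator times a fast-increment indicator, first and second moments, Paley--Zygmund) matches the paper's, but there is a genuine gap at exactly the point you flag as ``the main obstacle'', and it is not a uniformity check that can be verified --- it fails. For a merely continuous $f$ the drift increment $\Delta_t:=f(t+h_n)-f(t)$ need not be $O\big(\sqrt{h_n\log(1/h_n)}\big)$, so the shifted tail probability $p_t:=\PP\big(|N(-\Delta_t,h_n)|\geq b_n\sqrt{2h_n\log(1/h_n)}\big)$ ranges anywhere between order $h_n^{b_n^2}$ and order $1$ as $t$ varies: the shift alters the Gaussian tail by an \emph{unbounded} multiplicative factor. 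Your normalization $C_n\asymp h_n^{-(1/2+b_n^2)}\sqrt{\log(1/h_n)}$ is calibrated to the worst case, so the first moment is indeed bounded below; but after the Markov-property factorization the conditional increment probability at the later of the two times is again $p_t$, so the off-diagonal second moment is governed by the $1/2$-energy of the tilted measure $p_t\,d\nu(t)$, which is not controlled by the energy of $\nu$ times $\big(\int p\,d\nu\big)^2$ when $p_t$ is non-uniform. Hence your two claims $\Ew[\Phi_n(u)\Phi_n(v)]\leq C_0|u-v|^{-s}$ and $\Ew[\mu_n([0,1])]\geq c_1>0$ cannot both hold with uniform constants, and the Paley--Zygmund ratio can degenerate. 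The missing idea, which is the heart of the paper's proof, is a change of event rather than a verification: replace the two-sided condition on $X$ by the \emph{one-sided} Brownian event $B(s+h)-B(s)>a\sqrt{2h\log(1/h)}$ (resp.\ $<-a\sqrt{2h\log(1/h)}$) with the sign chosen against $\sign\big(f(s+h)-f(s)\big)$, so that it still forces $|X(s+h)-X(s)|\geq a\sqrt{2h\log(1/h)}$, and restrict the integral to the sign class $\SSS^\circ(h)$ carrying $\mu$-measure at least $1/2$. The increment probability is then \emph{exactly} the standard Gaussian tail for every $s$, both moments factor cleanly, and the threshold $\eta>a^2+1/2$ emerges from the near-diagonal term via Lemma~\ref{lm: KS Hilfslemma}.

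A second, independent problem is your endgame. Weak convergence $\mu_{n_k}\to\mu_\infty$ gives, for $t\in\mathrm{supp}(\mu_\infty)$, points $t_k\to t$ carrying the scale-$h_{n_k}$ increment bound, but transferring that bound from $t_k$ to $t$ requires $|f(t_k)-f(t)|=o\big(\sqrt{h_{n_k}\log(1/h_{n_k})}\big)$, and weak convergence provides no rate on $|t_k-t|$; for general continuous $f$ this transfer fails. The paper instead observes that $J_{\mu}(h,a)>0$ forces the open set $\GG(a,h)\cap\zero_h(X)$ --- defined by a supremum over \emph{all} scales $\leq h$ at the point itself --- to meet $E$, that these sets decrease to the target set as $h\downarrow 0$, and then applies the stochastic-codimension result for countable intersections of open random sets (Lemma~\ref{lm: KS codimensionlemma}) to conclude positive probability of non-emptiness. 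You would need that machinery, or a nested-compact-sets argument, in place of the weak limit.
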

Now the lower bound of Theorem \ref{thm: intersection fast times and zeroset - continuous functions} follows using the following stochastic codimension argument.
For a random set $M\subset \RR_+$ the \emph{upper stochastic codimension} $\overline{\mathrm{codim}}(M)$ is defined by the smallest value $\gamma$ such that for all Borel measurable sets $G$ with $\dim(G)>\gamma$ holds that $\PP(G\cap M \neq \emptyset)>0$.
Then $\overline{\mathrm{codim}}(M) + \dim(M) \geq 1$ with positive probability, see \cite{Kho02}, p. 436 and also \cite{Kho03}, p. 238. 

In order to prove Theorem \ref{thm: compact set intersection} we need some technical lemmas. First we give some definitions.
For $\eta >0$ and an atomless probability measure $\mu$, call

\[ A_{\eta}(\mu):= \sup_{0<h\leq 1/2}\sup_{t\in[h,1-h]}\frac{\mu[t-h,t+h]}{h^\eta}.
\]
Further, define for $h>0$
$$ S_h(\mu) := \sup_{0\leq s\leq h} \int_s ^h \frac{1}{\sqrt{t-s}} d\mu(t), and $$
$$ \tilde{S}_h(\mu) := \sup_{0\leq s\leq 1} \int_s ^{(s+h\wedge 1)} \frac{1}{\sqrt{t-s}} d\mu(t). $$

The first lemma is a version of the famous Frostman's lemma.

\begin{lemma}[Frostman, cf. \cite{Kahane}, p. 130]\label{lm: Frostman}
Let $\eta >0$, and let $E \subset [0,1]$ be Borel measurable set satisfying $\eta < \dim(E)$,
then there is an atomless probability measure $\mu$ on $E$ for which
$ A_{\eta}(\mu) < \infty$.
\end{lemma}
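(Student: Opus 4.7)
My plan is to execute the classical dyadic construction of Frostman. First, using the fact that the Hausdorff dimension of a Borel set is the supremum of the dimensions of its compact subsets, pass to a compact $E' \subset E$ with $\dim E' > \eta$ and relabel $E := E'$. Fix some $\eta' \in (\eta, \dim E)$, so that the $\eta'$-dimensional Hausdorff content $c_0$ of $E$, defined as the infimum of $\sum_i |U_i|^{\eta'}$ over countable covers of $E$ by intervals, is strictly positive. The goal is to build a nonzero measure on $E$ satisfying a dyadic upper bound $\mu(Q) \leq |Q|^{\eta'}$, then normalize and promote to centered intervals.

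For each $n \geq 1$ I would construct a measure $\mu_n$ on the dyadic tree of $[0,1]$ as follows. Begin by placing mass $2^{-n\eta'}$ on every dyadic interval of length $2^{-n}$ that meets $E$. Then sweep upward through levels $k = n-1, n-2, \ldots, 0$: on each dyadic interval $Q'$ of length $2^{-k}$ whose currently accumulated mass exceeds $2^{-k\eta'}$, rescale the mass inside $Q'$ uniformly so that $\mu_n(Q') = 2^{-k\eta'}$. By construction $\mu_n(Q) \leq |Q|^{\eta'}$ on every dyadic $Q$, and $\mu_n$ is supported within distance $2^{-n}$ of $E$. Weak-$\ast$ compactness on bounded subsets of $[0,1]$ then yields a subsequential limit $\mu$, supported on the compact set $E$, which inherits $\mu(Q) \leq |Q|^{\eta'}$ on every dyadic interval.

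The decisive quantitative input is a lower bound $\mu_n([0,1]) \geq c_0/C$ for some constant $C$ depending only on the dyadic branching factor, which guarantees that the weak limit $\mu$ is nonzero and can be renormalized to a probability measure. This comes from the standard tagging argument: for each $x \in E$ follow the chain of dyadic ancestors of the finest-scale interval containing $x$ and record the coarsest ancestor $Q_x$ at which the rescaling was actually triggered; these tagged intervals cover $E$, and tracing the mass lost at each rescaling gives $\sum_x |Q_x|^{\eta'} \leq C\,\mu_n([0,1])$, whence $\mu_n([0,1]) \geq c_0/C$. Once $\mu$ is normalized, the dyadic bound is promoted to arbitrary centered intervals by covering $[t-h, t+h]$ with $O(1)$ dyadic intervals of length $\asymp h$, giving $\mu[t-h, t+h] \leq C h^{\eta'} \leq C h^{\eta}$ for $0 < h \leq 1/2$ (using $\eta' > \eta$ and $h < 1$); atomlessness is an immediate consequence. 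The main obstacle is precisely this total-mass lower bound: one has to verify that the tagged ancestors really form a valid cover of $E$ and that the compound rescaling ratios telescope cleanly into the estimate $\sum_x |Q_x|^{\eta'} \lesssim \mu_n([0,1])$, which is what turns the positive Hausdorff content of $E$ into a positive amount of surviving mass.
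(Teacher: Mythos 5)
The paper offers no proof of this lemma at all---it is imported verbatim from Kahane (p.~130)---and your argument is exactly the classical dyadic mass-distribution construction found there: pass to a compact subset of dimension exceeding $\eta$, build the capped dyadic measures $\mu_n$, extract a weak-$\ast$ limit, and use the maximal saturated ancestors to convert positive $\eta'$-Hausdorff content into a positive total-mass lower bound, then pass from dyadic to centered intervals. Your outline is correct (the only nontrivial ingredient you lean on silently is the Besicovitch--Davies theorem guaranteeing a compact subset of full dimension inside a Borel set, which is standard), so there is nothing to compare against: you have supplied the proof the paper delegates to the reference.
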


\begin{lemma}[\cite{KS}, Lemma 8.2]\label{lm: KS Hilfslemma}
Let $\mu$ be an atomless probability measure on a compact set $E \subset [0,1]$, and for $h >0$ and $\eta > 1/2$,
$$ S_h(\mu) \leq \frac{2\exp(\eta)}{2\eta -1} A_{\eta}(\mu) h^{\eta - 1/2},$$
$$ \tilde{S}_h(\mu) \leq \frac{2\exp(\eta)}{2\eta -1} A_{\eta}(\mu) h^{\eta - 1/2}.$$
\end{lemma}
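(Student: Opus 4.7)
The plan is to derive both bounds from a single uniform estimate on
\[
I(s,h) := \int_s^{(s+h)\wedge 1} (t-s)^{-1/2}\,d\mu(t),
\]
after noting that $\tilde S_h(\mu) = \sup_{s \in [0,1]} I(s,h)$ and that $S_h(\mu) \le \tilde S_h(\mu)$ because each defining integral $\int_s^h (t-s)^{-1/2}\,d\mu(t)$ (with $s \in [0,h]$) is dominated by $I(s,h)$. The main tool will be the layer-cake representation of the integrand together with the atomlessness of $\mu$.

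First I would write $(t-s)^{-1/2} = \int_0^\infty \mathbf{1}\{u^{-2} > t-s\}\,du$, apply Fubini, and use atomlessness to obtain
\[
I(s,h) = \int_0^\infty \mu\bigl[s,\, s + \min(h, u^{-2})\bigr]\,du.
\]
Splitting at the natural threshold $u = h^{-1/2}$, where the minimum switches from $h$ to $u^{-2}$, produces
\[
I(s,h) \le h^{-1/2}\,\mu[s, s+h] + \int_{h^{-1/2}}^\infty \mu[s, s+u^{-2}]\,du.
\]

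Next I would bound the one-sided measures $\mu[s,s+r]$ in terms of $A_\eta(\mu)$. Each interval $[s, s+r]$ is contained in a symmetric interval of half-length $r$ centred at some point of $[r, 1-r]$ (using the reductions $[s,s+r] \subseteq [0,2r]$ or $[s,s+r] \subseteq [1-2r,1]$ near the endpoints), so $\mu[s, s+r] \le A_\eta(\mu)\, r^\eta$ for $r \le 1/2$. Substituting and using $\eta > 1/2$ to compute $\int_{h^{-1/2}}^\infty u^{-2\eta}\,du = h^{\eta - 1/2}/(2\eta - 1)$ yields
\[
I(s,h) \le A_\eta(\mu)\, h^{\eta - 1/2}\left(1 + \frac{1}{2\eta - 1}\right) = A_\eta(\mu)\, h^{\eta - 1/2}\cdot \frac{2\eta}{2\eta - 1}.
\]
Since $\eta \le \exp(\eta)$ for $\eta > 0$, this is at most $\frac{2\exp(\eta)}{2\eta - 1}\, A_\eta(\mu)\, h^{\eta - 1/2}$; taking suprema then gives both claimed inequalities.

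The principal technical nuisance I expect is the boundary book-keeping in the step $\mu[s, s+r] \le A_\eta(\mu)\, r^\eta$: when $[s,s+r]$ sits too close to $0$ or $1$ for a symmetric enclosure to live inside $[0,1]$, one must use that $[0,2r]$ and $[1-2r,1]$ are themselves symmetric intervals of half-length $r$ falling under the definition of $A_\eta$. The regime $h > 1/2$ (if one wishes to include it) is handled by the trivial bound $\mu[\,\cdot\,] \le 1$ over the short range $u \in [h^{-1/2}, \sqrt{2}]$. The remaining slack between the natural constant $2\eta/(2\eta-1)$ and the stated $2\exp(\eta)/(2\eta-1)$ is purely cosmetic.
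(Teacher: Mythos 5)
Your argument is correct. Note first that the paper does not prove this lemma at all: it is imported verbatim as Lemma~8.2 of \cite{KS}, so there is no in-paper proof to compare against. Your layer-cake derivation is a clean, self-contained substitute. The reduction $S_h(\mu)\le\tilde S_h(\mu)$ is valid (for $s\in[0,h]$ the range $[s,h]$ sits inside $[s,(s+h)\wedge 1]$ up to the irrelevant part of $[0,1]^c$), Tonelli applies since the integrand is nonnegative, and atomlessness disposes of both the singularity at $t=s$ and the open/closed endpoint issue in $\mu[s,s+\min(h,u^{-2})]$ (strictly, the truncation at $1$ makes your displayed identity for $I(s,h)$ an inequality $\le$, which is the direction you need anyway). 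The step $\mu[s,s+r]\le A_\eta(\mu)\,r^\eta$ is the only place requiring care, and your enclosure by symmetric intervals handles it; in fact the sharper choice $[s,s+r]=[c-r/2,\,c+r/2]$ with $c=s+r/2$ (shifting to $[1-r,1]$ if $s+r>1$) works for all $r\le 1$ with no case split and even yields the extra factor $2^{-\eta}$. The resulting constant $2\eta/(2\eta-1)$ is indeed dominated by $2\exp(\eta)/(2\eta-1)$, and your patches for $h>1/2$ are easily absorbed since $A_\eta(\mu)\ge 2^\eta$ forces the right-hand side to be bounded below by $e^{3/2}\sqrt{2}$ in that regime. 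For reference, the proof in \cite{KS} reaches the same estimate by decomposing the domain of integration dyadically, $\int_{s+h2^{-k-1}}^{s+h2^{-k}}(t-s)^{-1/2}\,d\mu(t)\le (h2^{-k-1})^{-1/2}\mu[s,s+h2^{-k}]$, and summing the geometric series; the two routes are equivalent in substance, with your continuous (layer-cake) version giving the slightly tidier constant.
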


\begin{lemma}[\cite{KS}, Theorem 2.5]\label{lm: KS codimensionlemma}
Let $(E_n)$ be a countable collection of open random sets. If $\sup_{n\geq 1}\overline{\mathrm{codim}(E_n)}<1$, then
$$\overline{\mathrm{codim}}(\bigcap_{n=1}^\infty E_n)=\sup_{n\geq 1}\overline{\mathrm{codim}}(E_n).$$ 
\end{lemma}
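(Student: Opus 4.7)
The inequality $\overline{\mathrm{codim}}(\bigcap_n E_n)\geq\sup_n\overline{\mathrm{codim}}(E_n)$ is immediate from set-theoretic monotonicity: since $\bigcap_n E_n\subseteq E_k$ for each $k$, any Borel $G$ with $\PP(G\cap\bigcap_n E_n\neq\emptyset)>0$ automatically satisfies $\PP(G\cap E_k\neq\emptyset)>0$, which forces $\overline{\mathrm{codim}}(E_k)\leq\overline{\mathrm{codim}}(\bigcap_n E_n)$ for every $k$. The substantive content is the reverse inequality.

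Write $\gamma:=\sup_n\overline{\mathrm{codim}}(E_n)<1$, fix an arbitrary Borel $G\subset[0,1]$ with $\dim G>\gamma$, and pick $\alpha$ with $\gamma<\alpha<\dim G$. The goal is to show $\PP(G\cap\bigcap_n E_n\neq\emptyset)>0$. By the inner regularity of Hausdorff dimension I replace $G$ by a compact $K\subseteq G$ with $\dim K>\alpha$, and by Frostman's lemma (Lemma \ref{lm: Frostman}) produce an atomless probability measure $\mu$ supported on $K$ with $\mu[t-h,t+h]\leq C h^{\alpha}$ for all $t$ and all $h>0$. It suffices to exhibit a positive-probability event on which $\mu(\bigcap_n E_n)>0$.

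My next step would be to prove, by induction on $N$, that $\overline{\mathrm{codim}}(E_1\cap\cdots\cap E_N)\leq\gamma$. Since the intersection of finitely many open sets is open, the induction reduces to the two-set statement $\overline{\mathrm{codim}}(E\cap E')\leq\max(\overline{\mathrm{codim}}(E),\overline{\mathrm{codim}}(E'))$ for open random $E,E'$. For this two-set step I would use that an open set is a countable union of rational open intervals, so on the positive-probability event $\{\mu(K\cap E)>0\}$ there is, by a measurable selection, a (random) rational open interval $I\subseteq E$ with $\mu(K\cap I)>0$; the Frostman measure restricted to the deterministic compact $\overline{K\cap I}$ certifies $\dim(K\cap I)\geq\alpha>\gamma\geq\overline{\mathrm{codim}}(E')$, so the codimension hypothesis on $E'$ yields $K\cap E\cap E'\neq\emptyset$ with positive probability.

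The main obstacle is the passage from finite to infinite intersection, because a priori the probabilities $p_N:=\PP(K\cap\bigcap_{j\leq N}E_j\neq\emptyset)$ could decay to zero. To avoid this I would strengthen the finite-$N$ conclusion quantitatively: set $Z_N:=\mu(K\cap\bigcap_{j\leq N}E_j)$ and use a Paley--Zygmund second moment estimate, combining the Frostman bound on $\mu$ with the open and individually codimension-bounded structure of the $E_j$, to obtain a uniform bound $\PP(Z_N\geq\delta)\geq q>0$ that does not depend on $N$. Since $Z_N$ is nonincreasing in $N$, the monotone convergence $Z_N\downarrow\mu(K\cap\bigcap_n E_n)$ combined with the uniform lower bound forces $\PP(\mu(K\cap\bigcap_n E_n)\geq\delta)\geq q>0$, so $K\cap\bigcap_n E_n$ is nonempty with positive probability, as required. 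The hardest point in this plan is making the Paley--Zygmund estimate uniform in $N$; this is precisely where openness of the $E_n$ (allowing rational-interval approximation) and the strict inequality $\gamma<1$ (ensuring that $\alpha$ can be chosen with $\gamma<\alpha\leq 1$) are both used.
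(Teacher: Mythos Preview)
The paper does not prove this lemma; it quotes it as Theorem~2.5 of \cite{KS} and uses it as a black box, so there is no in-paper argument to compare your sketch against.

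Your sketch has a genuine gap at the decisive step. You assert a uniform Paley--Zygmund bound $\PP(Z_N\geq\delta)\geq q>0$ with $q$ independent of $N$, but nothing in the hypotheses produces any second-moment input on $Z_N=\mu\bigl(K\cap\bigcap_{j\leq N}E_j\bigr)$: the codimension assumption yields only the qualitative fact $\PP(K\cap E_j\neq\emptyset)>0$ for each fixed $j$, with no lower bound on $\Ew[Z_N]$ and no upper bound on $\Ew[Z_N^2]$. In fact the bare hypotheses you work from cannot carry the conclusion: take independent $E_j$ equal to $[0,1]$ with probability $1-\tfrac{1}{j+1}$ and to $\emptyset$ otherwise; then every $\overline{\mathrm{codim}}(E_j)=0$, yet $\bigcap_j E_j=\emptyset$ almost surely. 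So additional structure beyond what you invoke is essential (in the paper's application the sets $\GG(a,h)\cap\zero_h(X)$ are nested and come with a quantitative hitting bound, which is what actually drives the limit). A related smaller issue: in your two-set step you apply the codimension hypothesis for $E'$ to the random compact $\overline{K\cap I}$, where $I$ depends on $E$; the definition of $\overline{\mathrm{codim}}$ only controls hitting of deterministic targets, so this needs an independence or conditioning argument you have not supplied.
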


\begin{proof}[Proof of Theorem \ref{thm: compact set intersection}]
First, for $h>0$ we define the two sets
$$ \SSS^+(h) := \{ t\in[0,1] : f(t+h) -f(t) \geq 0 \},$$
and
$$ \SSS^-(h) := \{ t\in[0,1] : f(t+h) -f(t) \leq 0 \},$$
Now for an atomless probability measure $\mu$ on $E$ let
$$ \SSS^\circ (h) := \left\{\begin{array}{l l} \SSS^-(h),& \text{ if } \int_0^1 \one_{\SSS^-(h) }(s)d\mu (s) \geq \int_0^1 \one_{ \SSS^+(h)  }(s)d\mu (s) , \\ \SSS^+(h),& \text{ if } \int_0^1 \one_{  \SSS^-(h)  }(s)d\mu (s) < \int_0^1 \one_{  \SSS^+(h)  }(s)d\mu (s) . \end{array}\right.$$
Since $\mu$ is a probability measure on the set $E \subset [0,1]$ it follows $1 \geq \int_0^1 \one_{ \SSS^\circ (h)  }d\mu (s) \geq 1/2$.

Define
\begin{align*} &{J}_{\mu}(h,a) := \int_0^1 \one_{\{  B(s) \in (f(s)-h, f(s)+h) \}} \\ &\cdot \one_{\{ B (s+h) - B(s) > a\sqrt{2h\log(1/h)} \text{ if } \SSS^\circ (h)={\SSS^-(h)}, B (s+h) - B(s) < -a\sqrt{2h\log(1/h)} \text{ if } \SSS^\circ (h) = {\SSS^+(h)}\}} \\
&\ \ \ \ \ \ \ \  \ \ \ \ \ \ \ \ \ \ \ \ \ \ \ \ \ \ \  \ \ \ \ \ \ \ \ \ \ \ \ \ \ \ \ \ \ \  \ \ \ \ \ \ \ \ \ \ \ \ \ \ \ \ \ \ \  \ \ \ \ \ \ \ \  \ \ \ \ \ \ \ \ \ \ \ \ \ \ \  \ \ \  d\mu(s).
\end{align*}
In the following we will denote the event $B (s+h) - B(s) > a\sqrt{2h\log(1/h)}$ if $\SSS^\circ (h)={\SSS^-(h)}$ and $B (s+h) - B(s) < -a\sqrt{2h\log(1/h)}$ if $\SSS^\circ (h) = {\SSS^+(h)}$ by $\KK_a (s,h)$.

For $h>0$ and $s\in[0,1]$, there are constants $C_1 >0$ and $C_2 >0$ (only depending on $\max_{x\in[0,1]} |f(x)|$) with
\begin{align}\label{probability of near zero}
 C_1 s^{-1/2}h \leq \PP(B(s) \in (f(s)-h, f(s)+h)) \leq C_2 s^{-1/2}h.
\end{align}
Note that, by the independence of increments of Brownian motion,
\begin{multline*}
 \Ew (J_{\mu}(h,a)) = \frac{1}{\sqrt{2\pi}}\int_{a\sqrt{2\log(1/h)}}^\infty \exp(-\frac{u^2}{2}) du \int_0^1 \one_{  \SSS^\circ(h) }(t)d\mu (t)
\\ \cdot
 \int_0^1 \PP(B(s) \in (f(s)-h, f(s)+h)) d\mu(s)
\end{multline*}
Applying \ref{probability of near zero} we get
\begin{align*}
 \Ew (J_{\mu}(h,a))  \geq \frac{C_1}{2\sqrt{2\pi}} h \int_{a\sqrt{2\log(1/h)}}^\infty \exp(-\frac{u^2}{2}) du \int_{h^2}^1 s^{-1/2}\mu(ds).
\end{align*}
We fix an $h'>0$, then there is a constant $c_1>0$ (depending on $\max_{x\in[0,1]} |f(x)|$) for all $0<h\leq h'$ such that
\begin{align}\label{firstmoment_fasttime}
 \Ew (J_{\mu}(h,a))  \geq c_1 h \int_{a\sqrt{2\log(1/h)}}^\infty \exp(-\frac{u^2}{2}) du.
\end{align}

Later we will apply the second moment method to $J_{\mu}(h,a)$. Therefore, we need to bound the second moment of $J_{\mu}(h,a)$ from above.
\begin{align} \label{second moment upper bound estimate with T_1 and T_2}
\Ew  &(J^2_{\mu}(h,a)) =  2 \Ew \Big[ \int_0^1 \one_{\{  B(t) \in (f(t)-h, f(t)+h) \}} \one_{\KK_a(t,h)} \nonumber\\ &\ \ \ \ \ \ \ \ \ \ \ \ \ \ \ \ \ \ \ \ \ \ \ \  \ \ \ \ \ \ \ \ \ \ \ \cdot\int_0^t \one_{\{  B(s) \in (f(s)-h, f(s)+h) \}} \one_{ \KK_a(s,h)} d\mu(s)d\mu(t) \Big ] \nonumber\\
&= \int_0^1 \one_{ \SSS^\circ(h)  }(r)d\mu (r) \cdot \sqrt{\frac{2}{\pi}}\int_{a\sqrt{2\log(1/h)}}^\infty \exp(-\frac{u^2}{2}) du \nonumber\\ 
& \ \ \ \: \cdot \Ew \Big[ \int_0^1 \one_{\{  B(t) \in (f(t)-h, f(t)+h) \}}   \int_0^t \one_{\{  B(s) \in (f(s)-h, f(s)+h) \}} \one_{ \KK_a(s,h)} d\mu(s)d\mu(t) \Big ] \nonumber
\\
&\leq \sqrt{\frac{2}{\pi}}\int_{a\sqrt{2\log(1/h)}}^\infty \exp(-\frac{u^2}{2}) du \cdot (T_1 + T_2),
\end{align}
where
\begin{align*}
T_1 &= \Ew \Big[ \int_h^1 \one_{\{  B(t) \in (f(t)-h, f(t)+h) \}}  
\cdot \int_0^{(t-h)^+} \one_{\{  B(s) \in (f(s)-h, f(s)+h) \}}  \\
& \ \ \ \ \ \ \ \  \ \ \ \ \ \ \ \ \ \ \ \ \ \ \ \ \ \ \  \ \ \ \ \ \ \ \ \ \ \ \ \ \ \ \ \ \ \ \ \ \ \ \ \ \ \ \ \ \ \ \ \ \ \ \ \ \ \cdot\one_{\KK_a(s,h)} d\mu(s)d\mu(t) \Big ], \\
T_2 &= \Ew \Big[ \int_0^1 \one_{\{  B(t) \in (f(t)-h, f(t)+h) \}}  \int_{(t-h)^+}^t \one_{\{  B(s) \in (f(s)-h, f(s)+h) \}} d\mu(s)d\mu(t) \Big ].
\end{align*}
First, we will estimate $T_1$, note that
\begin{multline*}
T_1 = \int_h^1 \int_0^{(t-h)^+} \PP  \Big( B(t) \in (f(t)-h, f(t)+h) , B(s) \in (f(s)-h, f(s)+h),\\ \KK_a(s,h) \Big)  d\mu(s)d\mu(t).
\end{multline*}
Take a $t\in[h,1]$ and an $s \in [0,t-h]$. Then we have $s\leq s+h \leq t$ and,
\begin{align*}
\PP (B(t)\in (f(t)-h&, f(t)+h) | B(r) \ \text{with} \ r \leq s+h)\\
&= \PP(B(t)-B(s+h)+B(s+h)\in (f(t)-h, f(t)+h) | B(r) \\
&\ \ \ \ \ \ \ \ \ \ \ \ \ \ \ \ \ \ \ \ \ \ \ \ \ \ \ \ \  \ \ \ \ \ \ \ \ \ \ \ \ \ \ \ \ \ \ \ \ \ \ \text{with} \ r \leq s+h)\\
&\leq \sup_{\zeta \in \RR} \PP ( B(t-s-h) + \zeta \in (f(t)-h, f(t)+h)).
\end{align*}
Since $B(t-s-h)$ is normally distributed we know by the unimodality property of the normal distribution that,
\begin{align*}
\sup_{\zeta \in \RR} \PP ( B(t-s-h) + \zeta \in (f(t)-h, f(t)+h)) \leq \PP ( B(t-s-h) \in (-h,h)).
\end{align*}
Hence, we get for $T_1$ that,
\begin{align*}
T_1 &\leq \int_h^1 \int_0^{(t-h)^+} \PP ( B(t-s-h) \in (-h,h)) \PP  \big( B(s) \in (f(s)-h, f(s)+h),\\  
&\ \ \ \ \ \ \ \ \ \ \ \ \ \ \ \ \ \ \ \ \ \ \ \ \ \ \ \ \ \ \ \
\ \ \ \ \ \ \ \ \ \ \ \ \ \ \ \ \ \ \ \ \ \ \ \ \ \ \ \ \ \ \ \ \ \ \
\ \ \ \ \  \KK_a(s,h) \big) d\mu(s)d\mu(t)\\
&\leq \int_0^1 \one_{\SSS^\circ(h)}(r)d\mu (r) \cdot \frac{1}{\sqrt{2\pi}}\int_{a\sqrt{2\log(1/h)}}^\infty \exp(-\frac{u^2}{2}) du \\ 
& \  \ \ \ \cdot \int_h^1 \int_0^{(t-h)^+} \PP ( B(t-s-h) \in (-h,h)) 
 \PP  \left( B(s) \in (f(s)-h, f(s)+h) \right) \\
&\ \ \ \ \ \ \ \ \ \ \ \ \ \ \ \ \ \ \ \ \ \ \ \ \ \ \ \ \ \ \ \
\ \ \ \ \ \ \ \ \ \ \ \ \ \ \ \ \ \ \ \ \ \ \ \ \ \ \ \ \ \ \ \ \ \ \
\ \ \ \ \  \ \ \ \ \ \ \ \ \ \ \ \ \ d\mu(s)d\mu(t).
\end{align*}
Now, by applying (\ref{probability of near zero}),
\begin{align*}
T_1 \leq c_2 h^2\int_{a\sqrt{2\log(1/h)}}^\infty \exp(-\frac{u^2}{2}) du  \int_h^1 \int_0^{(t-h)^+} \frac{1}{\sqrt{s(t-s-h)}} d\mu(s)d\mu(t),
\end{align*}
with some positive constant $c_2$ (depending on $\max_{x\in[0,1]} |f(x)|$). Further, we get
\begin{align}\label{T_1 upper bound estimate}
T_1 &\leq c_2 h^2\int_{a\sqrt{2\log(1/h)}}^\infty \exp(-\frac{u^2}{2}) du  \int_0^{1-h} \frac{1}{\sqrt{s}}  \int_{s+h}^{1} \frac{1}{\sqrt{(t-s-h)}} d\mu(t)d\mu(s) \nonumber\\
&\leq c_2 h^2\int_{a\sqrt{2\log(1/h)}}^\infty \exp(-\frac{u^2}{2}) du  \cdot S_1^2(\mu) \nonumber\\
&\leq \frac{4c_2 \exp{(2\eta)}}{(2\eta-1)^2} A_\eta^2(\mu) h^2\int_{a\sqrt{2\log(1/h)}}^\infty \exp(-\frac{u^2}{2}) du,
\end{align}
where the last step follows from Lemma \ref{lm: KS Hilfslemma}, with $\eta >1/2$.

The next step is to estimate $T_2$. Again, we use the unimodality argument as before. For all $t\geq s$ and $h>0$,

\begin{align*}
\PP ( B(t) \in  (f(t)-h  &, f(t)+h)  , B(s) \in (f(s)-h, f(s)+h) ) \\
&\leq \PP ( B(t-s) \in (-h, h)) \PP ( B(s) \in (f(s)-h, f(s)+h))\\
&\leq \PP ( B(t-s) \in (-h, h)) \PP ( B(s) \in (-h, h)).
\end{align*}

Now we can use the same calculations as in \cite{KS}, p.413 to bound $T_2$ from above. For the sake of completeness we perform them in the following. With (\ref{probability of near zero}) we get that,
\begin{align}\label{T_2 upper bound estimate}
T_2 &\leq C^2_2 h^2  \int_0^{1}  \int_{(t-h)^+}^{t} \frac{1}{\sqrt{s(t-s)}} d\mu(s)d\mu(t) \nonumber \\ \nonumber
&= C^2_2 h^2\Big[ \int_0^{h} \int_{0}^{t} \frac{1}{\sqrt{s(t-s)}} d\mu(s)d\mu(t)
+ \int_h^{1} \int_{t-h}^{t} \frac{1}{\sqrt{s(t-s)}} d\mu(s)d\mu(t)\Big]\\ \nonumber
&\leq C^2_2 h^2\Big[ \int_0^{h}  \frac{1}{\sqrt{s}}  \int_{s}^{h} \frac{1}{\sqrt{t-s}} d\mu(t) d\mu(s)   \\ \nonumber
& \ \ \ \ \ \ \ \ \ \ \ \ \ \ \ \ \ \ \ \ \ \ \ \ \ \ \ \ \ \ \ \ + \int_0^{1} \frac{1}{\sqrt{s}} \int_{s}^{(s+h)\wedge 1} \frac{1}{\sqrt{t-s}} d\mu(t) d\mu(s) \Big]\\ \nonumber
&\leq C^2_2 h^2\Big[ S_h^2(\mu) + S_1(\mu)\tilde{S}_h(\mu) \Big] \\ \nonumber
&\leq C^2_2 h^2\Big[ \frac{4 \exp{(2\eta)}}{(2\eta-1)^2} A_\eta^2(\mu) h^{2\eta -1} + \frac{4 \exp{(2\eta)}}{(2\eta-1)^2} A_\eta^2(\mu) h^{\eta -1/2} \Big]\\
&\leq \frac{8C^2_2 \exp{(2\eta)}}{(2\eta-1)^2} A_\eta^2(\mu) h^{\eta +3/2}
\end{align}
Therefore, with (\ref{second moment upper bound estimate with T_1 and T_2}), (\ref{T_1 upper bound estimate}) and (\ref{T_2 upper bound estimate}) we can now bound $\Ew (J^2_{\mu}(h,a))$ from above. There is a constant $c_3>0$ such that
\begin{align}\label{secondmoment_fasttime}
\Ew (J^2_{\mu}(h,a)) \leq \frac{c_3 \exp{(2\eta)}}{(2\eta-1)^2} A_\eta^2(\mu) (h^{\eta +3/2}\Phi +h^2\Phi ^2),
\end{align}
where $\Phi = \frac{1}{\sqrt{2\pi}}\int_{a\sqrt{2\log(1/h)}}^\infty \exp(-\frac{u^2}{2}) du $.

The next step is applying the second moment method. First, we define the four sets,
\[
 \GG (a,h) := \Big\{ t\in[0,1] \Big| \sup_{0\leq s\leq h} \frac{|X(t+s)-X(t)|}{\sqrt{2s\log{(1/s)}}} > a  \Big\},
\]
and
\[
 \GG^+ (a,h) := \Big\{ t\in [0,1] \Big| \sup_{\substack{0\leq s\leq h \\ s: t\in \SSS^-(s)}} \frac{B(t+s)-B(t)}{\sqrt{2s\log{(1/s)}}} > a  \Big\},
\]
and
\[
 \GG^- (a,h) := \Big\{ t\in[0,1] \Big| \sup_{\substack{0\leq s\leq h \\ s: t\in \SSS^+(s)}} \frac{B(t+s)-B(t)}{\sqrt{2s\log{(1/s)}}} < -a  \Big\},
\]
and $\zero_h(X):=\{t\in[0,1]: |X(t)| < h\}$.
Note that $\GG^+ (a,h) \cup \GG^- (a,h) \subset \GG (a,h)$. 
By Lemma \ref{lm: Frostman}, if $\eta< \dim(E)$, then there is an atomless probability measure $\mu$ on $E$ with $A_\eta(\mu)<\infty$. Fix such a measure $\mu$ and an $\eta$ such that $a^2+1/2<\eta<\dim(E)$.

By the Paley-Zygmund inequality, $\mathbb{P}(J_{\mu}(h,a)>0)  \geq \frac{(\mathbb{E} [J_{\mu}(h,a)])^2} {\mathbb{E} (J^2_{\mu}(h,a))}$.
Using the fact that $\int_x^{\infty} \exp{(-\frac{u^2}{2})} du \geq \frac{x}{x^2 + 1}\exp{(-\frac{x^2}{2})}$ (see for instance \cite{MP}, Lemma 12.9), we see that $\Phi \geq \frac{h^{a^2}a\sqrt{\log(1/h)}}{{\sqrt{\pi}}(2a^2\log(1/h)+1)}$.
Now, for small enough $h$ and some positive constant $c_4$ we get
\begin{align*}
\frac{(\mathbb{E} [J_{\mu}(h,a)])^2} {\mathbb{E} (J^2_{\mu}(h,a))} &\geq \frac{c_4 (2\eta-1)^2}{\exp(2\eta) A_\eta^2(\mu)}\Big[\frac{h^{\eta -1/2}}{\Phi} + 1\Big]^{-1}\\
&\geq \frac{c_4 (2\eta-1)^2}{\exp(2\eta) A_\eta^2(\mu)}\Big[h^{\eta-a^2 -1/2}\frac{\sqrt{\pi}(2a^2\log(1/h)+1)}{a\sqrt{\log(1/h)}} + 1\Big]^{-1}.
\end{align*}
Since $h^{\eta-a^2 -1/2}\frac{\sqrt{\pi}(2a^2\log(1/h)+1)}{a\sqrt{\log(1/h)}}$ goes to $0$ as $h$ goes to $0$, it follows that there is a number $\rho$ depending on $\eta$ for small enough $h$ with $\liminf_{h\rightarrow0^+}\mathbb{P}({J}_{\mu}(h,a)>0)> \rho >0$. The event ${J}_{\mu}(h,a)>0$ implies $\GG (a,h) \cap \zero_h(X) \cap E \neq \emptyset$.
Note that if $h\leq h'$, then $\{\GG (a,h) \cap \zero_h(X)\} \subset \{\GG (a,h') \cap \zero_{h'}(X)\}$. $\bigcap_{h>0}\{\GG (a,h) \cap \zero_h(X)\} \cap E$ equals to
$$\Big\{ t\in\zero(X) \cap E  \Big| \limsup_{h \downarrow 0} \frac{|X(t+h)-X(t)|}{\sqrt{2h\log{(1/h)}}} \geq a  \Big\}.$$
Observe that for every $h>0$, $\GG (a,h) \cap \zero_h(X)$ is an open subset of $[0,1]$. We can apply Lemma \ref{lm: KS codimensionlemma} since $\overline{\mathrm{codim}}(\GG (a,h) \cap \zero_h(X)) \leq \dim (E)$ for all small enough $h>0$).
It follows 
 that
$$\PP\Big( \Big\{ t\in\zero(X)\cap E  \Big| \limsup_{h \downarrow 0} \frac{|X(t+h)-X(t)|}{\sqrt{2h\log{(1/h)}}} \geq a  \Big\}\neq \emptyset\ \Big) > 0.$$

\end{proof}

\section{Proof of Theorem \ref{thm: fast times lower bound general drift}} \label{section lower bound general continuous functions without intersection}

We will need some notation first. An interval $I$ is called \textit{dyadic} if it is of the form $I=[k2^{m},(k+1)2^m]$ for some integers $k\geq 0$ and $m$.
For each positive integer $m$ let $F_m$  be the collection of dyadic intervals $[k2^{-m},(k+1)2^{-m}]$ for $k = 0, . . . , 2^m-1$ and $F$ be the union over all such collections.
For each interval $I\in F$ let $L(I)$ be a random variable that takes only the values $0$ and $1$. Define the sets 
$$\JJ_m := \bigcup_{\substack{I\in F_m: \\L(I)=1}} I,$$ and 
$$\JJ:=\bigcap_{n=1}^{\infty}\bigcup_{m=n}^{\infty} \JJ_m.$$
$\JJ$ is called \textit{limsup fractal} since $\one_\JJ=\limsup_{m\rightarrow\infty} \one_{\JJ_m}$.
In order to prove Theorem \ref{thm: fast times lower bound general drift} we will show a lower bound on the Hausdorff dimension of a certain limsup fractal. For more on this method see for instance \cite{MP}.

Fix an $\epsilon >0$ and an integer $m> 0$. For an interval $I=[t_I,s_I]$ of the form $[k2^{-m},(k+1)2^{-m}]$ we set $L(I) =1$ if $|{B}(t_I+m2^{-m}) - {B}(t_I)| > a(1+\epsilon)\sqrt{m2^{-m+1}\log(m^{-1}2^m)}$ holds.

Now we want to show that the set $\JJ$ associated with this family of random variables $\{L(I), I\in F\}$ is contained in a set of points fulfilling that at least ``half of the points'' are $a$-fast times. Then, a lower bound of the Hausdorff dimension of the set of $\JJ$ is also a lower bound of the set of $a$-fast times of the process $X$.

Note that there is a constant $c_1>0$ such that for all $s,t\in{[0,2]}$ with $|s-t|\leq h'$ with random $h'>0$,
$$ |B(s)-B(t)| \leq c_1\sqrt{|s-t|\log{\frac{1}{|s-t|}}},$$
almost surely (see Theorem 1.12 of \cite{MP}). 
Let $t\in \JJ$ and also $t\in I =[t_I,s_I] \in F_m$ with $L(I)=1$. Then, by the triangle inequality it follows
\begin{multline*}
|{B}(t + m2^{-m})-{B}(t)| \\ \geq |{B}(t_I+m2^{-m}) - {B}(t_I)| -|{B}(t + m2^{-m})-{B}(t_I+m2^{-m})| -|{B}(t_I)-{B}(t)|.
\end{multline*}
Now we see that for $m$ (larger than some random $m'>0$ and) large enough such that $a\epsilon\sqrt{2m\log(m^{-1}2^m)} \geq 2c_1\sqrt{\log{2^{m}}}$ the following inequalities hold, 
\begin{align*}
 |{B}(t_I+m2^{-m}) - {B}(t_I)| -& |{B}(t + m2^{-m})-{B}(t_I+m2^{-m})| -|{B}(t_I)-{B}(t)| \\ &\geq a(1+\epsilon)\sqrt{m2^{-m+1}\log(m^{-1}2^m)} - 2c_1\sqrt{2^{-m}\log{2^{m}}} \\ &\geq a\sqrt{m2^{-m+1}\log(m^{-1}2^m)}.
\end{align*}
This event happens for infinitely many $m$'s. 
Therefore, $t$ is an $a$-fast time of $B$.

Further, we define a process $\hat{B}$ depending on the Brownian motion $B$ by tossing a coin,
$$ \hat{B} =\left\{\begin{array}{l l} B,& \text{ with probability } 1/2, \\ -B,& \text{ with probability } 1/2 . \end{array}\right.$$ 
If the time $t$ is an $a$-fast time of $B$, then it is also an $a$-fast time of $\hat{B}$.  
Note that if $$|\hat{B}(t + m2^{-m})-\hat{B}(t)| \geq a\sqrt{m2^{-m+1}\log(m^{-1}2^m)}$$ holds, then conditional on this the event $$\hat{B}(t + m2^{-m})-\hat{B}(t) \geq a\sqrt{m2^{-m+1}\log(m^{-1}2^m)},$$ happens with probability of at least $1/2$ and
$$\hat{B}(t + m2^{-m})-\hat{B}(t) \leq -a\sqrt{m2^{-m+1}\log(m^{-1}2^m)},$$ happens with probability of at least $1/2$ as well. Therefore, we see that the event that $\hat{B}(t + m2^{-m})-\hat{B}(t) \geq a\sqrt{m2^{-m+1}\log(m^{-1}2^m)}$ if $f(t + m2^{-m})-f(t) \leq 0$ or $\hat{B}(t + m2^{-m})-\hat{B}(t) \leq -a\sqrt{m2^{-m+1}\log(m^{-1}2^m)}$ if $f(t + m2^{-m})-f(t) > 0$ happens
 with probability of at least $1/2$. Since $\hat{B}$ is also a Brownian motion, it follows that $t$ is an $a$-fast time of the process $X$ with probability of at least $1/2$.

Now set $\dim \JJ = \alpha$ (we actually know by Orey, Taylor (\ref{thm: OT74}) and Theorem \ref{thm: limsup fractal method} that $\dim \JJ = 1- a^2$ almost surely). Let $\epsilon > 0$, then there exists a probability measure $\mu$ on $\JJ$ such that the energy 
$$\Ew \Big( \int_{[0,1]} \int_{[0,1]}  \frac{1}{|x-y|^{\alpha-\epsilon}} d\mu (x) d\mu (y)  \Big) < \infty,$$
see for instance \cite{MP}, p.113 or \cite{Mattila}.
Define $$\hat{\JJ}:= \{ t \in \JJ | t \text{ is an } a\text{-fast time of } X \},$$ and a probability measure on $\hat{\JJ}$ by 
$\mu'(A) = \frac{\mu (A)}{\mu(\hat{\JJ})}$, where $A$ are measurable sets with respect to $\mu$. Note that 
$$\mu(\hat{\JJ}) = \int_{[0,1]} \PP (t \text{ is an } a\text{-fast time of } X \ | \ \FF) d \mu (t), $$
where $\FF$ is the sigma algebra of $B$. Then $\mu(\hat{\JJ}) \geq \frac{1}{2} \mu({\JJ}) = \frac{1}{2}$. Therefore,
\begin{multline*}
 \Ew \Big( \int_{[0,1]} \int_{[0,1]}  \frac{1}{|x-y|^{\alpha-\epsilon}} d\mu' (x) d\mu' (y)  \Big)\\
\leq \Ew \Big( 4 \cdot \int_{[0,1]} \int_{[0,1]}  \frac{1}{|x-y|^{\alpha-\epsilon}} d\mu (x) d\mu (y)  \Big) < \infty.
\end{multline*}
This implies $\dim \hat{\JJ} > \alpha-\epsilon$ almost surely (see Theorem 4.27 of \cite{MP}), and by letting $\epsilon \downarrow 0$, it follows that $\dim \hat{\JJ} \geq \dim \JJ$ almost surely. 

The rest of the proof is the same as in \cite{MP} and we give the details for the sake of completeness. 
The next step is to bound the first moment of $L(I)$ for $I\in F_m$ from below. Note that 
\begin{align*}
\PP(L(I) =1) &\geq \PP \big(B(t_I+m2^{-m}) - B(t_I) > a(1+\epsilon)\sqrt{m2^{-m+1}\log(m^{-1}2^m)}\big)\\
&\geq \PP (B(1) > a(1+\epsilon)\sqrt{2\log(m^{-1}2^m)})\\
&\geq 2^{-ma^2 (1+\epsilon)^3},
\end{align*}
for large enough $m$ and where the last step follows from the fact that $$\int_x^{\infty} \exp{(-\frac{u^2}{2})} du \geq \frac{x}{x^2 + 1}\exp{(-\frac{x^2}{2})},$$ (see for instance \cite{MP}, Lemma 12.9) and $\frac{a(1+\epsilon)\sqrt{2\log(m^{-1}2^m)})}{\sqrt{2\pi}(1+2a^2(1+\epsilon)^2\log(m^{-1}2^m))} \exp (-a^2 (1+\epsilon)^2 \log(m^{-1}2^m)) \geq 2^{-ma^2 (1+\epsilon)^3}$ for sufficiently large enough $m$.

In order to prove Theorem \ref{thm: fast times lower bound general drift} we will apply the following theorem.

\begin{theorem}[Theorem 10.6 of \cite{MP}]\label{thm: limsup fractal method}
Let $\JJ$ be a limsup fractal associated to the family of random variables $\{L(I), I\in F\}$. 
Suppose $p_k:=\PP(L(I) =1)$ is the same for all $I\in F_k$, and for an interval $I \in F_m$ let $M_n(I) := \sum_ {I'\subset I, I' \in F_n} L(I')$ with $m\leq n$. If there are $\eta_n \geq 1$ and $\gamma \in (0,1)$ such that
$$\Var (M_n(I)) \leq \eta_n \Ew(M_n(I)) = \eta_n p_n 2^{n-m},$$
and also
$$ \lim_{n\rightarrow\infty}  2^{(\gamma-1)n} \cdot \frac{\eta_n}{p_n} = 0,$$
then $\dim \JJ \geq \gamma$, almost surely.
\end{theorem}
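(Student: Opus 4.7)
My plan is to deploy the second-moment and energy method characteristic of limsup-fractal theory: construct a natural random measure supported on the approximating sets $\JJ_n$, bound its first two moments and its expected $\gamma$-energy using the two hypotheses, and then invoke the energy criterion for Hausdorff dimension. Define
\[
\mu_n(dx)\;:=\;\frac{1}{p_n}\sum_{I\in F_n} L(I)\,\one_I(x)\,dx,
\]
so that $\mu_n$ is supported on the closed set $\JJ_n$. For any $I\in F_m$ with $m\le n$ we have $\mu_n(I)=2^{-n}M_n(I)/p_n$, hence $\Ew[\mu_n(I)]=2^{-m}$ and $\Ew\|\mu_n\|=1$. The variance hypothesis immediately yields $\Var(\mu_n(I))\le \eta_n\,2^{-n-m}/p_n$.

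The first main estimate controls the expected $\gamma$-energy $E_\gamma(\mu_n):=\iint|x-y|^{-\gamma}\,d\mu_n(x)\,d\mu_n(y)$. A standard dyadic decomposition of the kernel gives
\[
E_\gamma(\mu_n)\;\lesssim\;\sum_{k=0}^{n} 2^{k\gamma}\sum_{I\in F_k}\mu_n(I)^2.
\]
Inserting the moment bounds and $|F_k|=2^k$,
\[
\Ew\bigl[E_\gamma(\mu_n)\bigr]\;\lesssim\;\sum_{k=0}^{n}\Bigl(2^{k(\gamma-1)}+2^{k\gamma}\,\tfrac{\eta_n}{p_n\,2^n}\Bigr)\;\lesssim\;\frac{1}{1-2^{\gamma-1}}+2^{(\gamma-1)n}\,\frac{\eta_n}{p_n},
\]
which is bounded uniformly in $n$ by the second hypothesis. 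The case $m=0$ of the same computation also bounds $\Ew\|\mu_n\|^2$ uniformly.

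The second main step extracts a nontrivial limit measure. Paley-Zygmund applied to $\|\mu_n\|$ yields $\PP(\|\mu_n\|\ge 1/2)\ge c_0>0$ uniformly, and Chebyshev applied to the uniformly $L^1$-bounded $E_\gamma(\mu_n)$ yields $\PP(E_\gamma(\mu_n)\le R)\ge 1-c_0/2$ for $R$ large enough. On the intersection of these events a subsequence $\mu_{n_k}$ converges weakly, by Prokhorov's theorem, to a random measure $\mu$ with $\|\mu\|\ge 1/2$, $E_\gamma(\mu)\le \liminf E_\gamma(\mu_{n_k})\le R$, and support inside $\bigcap_N\overline{\bigcup_{n\ge N}\JJ_n}$; the dyadic-interval structure of the $\JJ_n$ places this support inside $\JJ$. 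The standard energy criterion (the converse direction of Lemma \ref{lm: Frostman}) then gives $\dim\JJ\ge\gamma$ with positive probability, and a zero-one argument suited to limsup fractals (the event $\{\dim\JJ\ge\gamma\}$ does not depend on $L(I)$ for $I$ in any finite set of levels) upgrades this to almost sure.

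The main obstacle is the last part of the second step: combining the Paley-Zygmund and Chebyshev events into a single positive-probability subsequential extraction, and verifying that the weak-limit measure is genuinely supported on $\JJ$ rather than on the a priori larger closure $\bigcap_N\overline{\bigcup_{n\ge N}\JJ_n}$. Both points are standard in limsup-fractal arguments but require care with the dyadic structure.
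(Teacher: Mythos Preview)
The paper does not give its own proof of this statement; it is quoted from \cite{MP} and invoked as a black box. Your moment and energy estimates are correct, but the step you flag as ``the main obstacle'' is a genuine gap, not a routine check. A weak limit of the $\mu_{n_k}$ is supported only on the topological limsup $\bigcap_N\overline{\bigcup_{k\ge N}\JJ_{n_k}}$, which can be strictly larger than $\JJ$: if $L([2^{-n},2^{1-n}])=1$ and $L([0,2^{-n}])=0$ for every $n$, then $0\notin\JJ$, yet $0$ lies in every closure $\overline{\bigcup_{n\ge N}\JJ_n}$ and a weak limit of the $\mu_n$ can put an atom there. The dyadic structure does not rescue this. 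The argument in \cite{MP} (following \cite{KPX}) avoids the problem by never taking a weak limit: one fixes a rapidly growing sequence of scales $n_1<n_2<\cdots$ and uses the variance hypothesis together with Chebyshev to show that, with probability arbitrarily close to $1$, every $I\in F_{n_k}$ with $L(I)=1$ contains roughly the expected number of subintervals $I'\in F_{n_{k+1}}$ with $L(I')=1$. This produces a \emph{nested} compact set inside $\bigcap_k\JJ_{n_k}\subset\JJ$ whose dimension is then bounded below directly.

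Your zero--one step is also unavailable: the hypotheses impose no independence of the $L(I)$ across different levels, and in the paper's own application (Section~\ref{section lower bound general continuous functions without intersection}) the Brownian increments defining $L(I)$ at different scales overlap, so $\{\dim\JJ\ge\gamma\}$ is not a tail event for any independent family. The almost-sure conclusion in \cite{MP} comes instead from making the nested construction succeed with probability at least $1-\varepsilon$ for every $\varepsilon>0$.
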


In order to be able to apply Theorem \ref{thm: limsup fractal method} it is left to bound the variance $\Var (M_n(I))$ from above. To achieve this, we see that
\begin{align*}
\Ew (M^2_n(I)) &= \sum_{\substack{I_1, I_2\subset I,\\ I_1,I_2 \in F_n}} \Ew \big[ L(I_1)L(I_2) \big]\\
&\leq \sum_{\substack{I_1\subset I,\\ I_1 \in F_n}} \Big[ (2n+1)\Ew ( L(I_1)) + \Ew ( L(I_1))\sum_{\substack{I_2\subset I,\\ I_2 \in F_n}}\Ew ( L(I_2)) \Big],
\end{align*}
where we used that the random variables $L(I_1)$ and $L(I_2)$ are independent if distance of the two intervals $I_1$ and $I_2$ is at least $n2^{-n}$, and further the trivial estimate $\Ew[L(I_1)L(I_2)]\leq \Ew[L(I_1)]$. It follows that
\begin{align*}
\Var (M_n(I)) = \Ew (M^2_n(I)) - \Ew (M_n(I))^2 \leq \sum_ {\substack{I_1\subset I,\\ I_1 \in F_n}} (2n+1)p_n = 2^{n-m}(2n+1)p_n.
\end{align*}

Applying Theorem \ref{thm: limsup fractal method} for $\gamma < 1 - a^2 (1 + \epsilon)^3$, the claim follows by letting $\epsilon \downarrow 0$.

\begin{flushright} $\blacksquare$ \end{flushright}

\section*{Acknowledgments}
The author thanks gratefully Michael Scheutzow for very fruitful discussions and advice.

\end{document}